\documentclass[11pt,reqno]{amsart}

\usepackage{amsxtra}
\usepackage[all]{xy}
\usepackage{amssymb}
\usepackage{amsmath}
\usepackage{amsfonts}
\usepackage{amscd}
\usepackage{mathtools}
\usepackage{mathrsfs}
\usepackage{tikz-cd}
\usepackage{tikz}
\usepackage{array}
\usepackage{booktabs}
\usepackage{graphicx}
\usepackage{float}
\usepackage{enumitem}
\usepackage{cases}
\usepackage{appendix}
\usepackage{microtype}
\usepackage{iftex}

\ifPDFTeX
  \usepackage{txfonts}
\fi
\usepackage{dsfont}

\usepackage[colorlinks=true,final,backref=page,hyperindex]{hyperref}
\hypersetup{
  linkcolor=red,
  citecolor=green,
  urlcolor=blue
}

\theoremstyle{plain}
\newtheorem{theorem}{Theorem}[section]
\newtheorem{proposition}[theorem]{Proposition}
\newtheorem{lemma}[theorem]{Lemma}
\newtheorem{corollary}[theorem]{Corollary}
\newtheorem{definition}[theorem]{Definition}
\newtheorem{example}[theorem]{Example}
\newtheorem{remark}[theorem]{Remark}

\newcommand{\kk}{\mathbb{K}}
\newcommand{\g}{\mathfrak{g}}
\newcommand{\h}{\mathfrak{h}}
\newcommand{\U}{\mathcal{U}}
\newcommand{\F}{\mathcal{F}}
\newcommand{\id}{\mathrm{id}}

\newcommand{\Der}{\mathrm{Der}}

\begin{document}

\title{Universal Enveloping Algebras of Hom-Lie Algebras with \texorpdfstring{$\alpha$}{alpha}-Derivations}

\author{Zhangqiuyu Jiang} \address[Zhangqiuyu Jiang]{School of Mathematical Sciences\\
Zhejiang Normal University\\
Jinhua 321004, China}
\email{jiangzhangqiuyu2002@zjnu.edu.cn}
\author{Chuangchuang Kang}
\address[Chuangchuang Kang]{School of Mathematical Sciences\\
Zhejiang Normal University\\
Jinhua 321004, China}
\email{kangcc@zjnu.edu.cn}
\author{Jiafeng L\"u}
\address[Jiafeng L\"u]{
School of Mathematical Sciences    \\
Zhejiang Normal University\\
Jinhua 321004              \\
China}
\email{jiafenglv@zjnu.edu.cn}
\thanks{*Corresponding Author: Chuangchuang Kang. Email: kangcc@zjnu.edu.cn.}

\begin{abstract}
In this paper, we construct nonunital universal enveloping Hom-associative algebras for multiplicative Hom-Lie algebras equipped with $\alpha$-derivations. We prove that the prescribed derivation extends to the enveloping algebra and that the resulting universal property yields a left adjoint to the commutator functor between the corresponding categories. When the twisting map is bijective, we establish a canonical isomorphism between this enveloping algebra and the twist of the nonunital universal enveloping algebra of the associated untwisted Lie algebra. This isomorphism preserves the twisting maps and the derivations. Consequently, the classical Poincar\'e-Birkhoff-Witt theorem identifies the associated graded algebra for the untwisted product with the positive-degree symmetric algebra. When the Hom-Lie algebra is finite-dimensional, the resulting PBW basis consists of ordered monomials of positive length. The twisting map, the extended $\alpha$-derivation, and its untwisted counterpart preserve the PBW filtration.
\end{abstract}

\subjclass[2010]{17B35, 17A30, 17B40}

\keywords{Hom-Lie algebra, Hom-associative algebra, $\alpha$-derivation, universal enveloping algebra, Poincar\'e--Birkhoff--Witt theorem}

\maketitle

\tableofcontents
\section{Introduction}

	Universal enveloping algebras relate Lie algebras to associative algebras
	through the commutator construction. They identify representations of a Lie
	algebra with modules over an associative algebra. Moreover, the
	Poincar\'e-Birkhoff-Witt (PBW) theorem describes the associated graded
	algebra as the symmetric algebra and gives an ordered monomial basis
	\cite{Humphreys}. These relations motivate the study of enveloping
	constructions that preserve additional algebraic structures.

	Hom-Lie algebras were introduced in the study of deformations defined by
	twisted derivations \cite{HartwigLarssonSilvestrov}. Their relation to
	Hom-associative algebras is given by the commutator construction
	\cite{MakhloufSilvestrov}, while the twisting construction of Yau associates
	Hom-algebras to ordinary algebras equipped with endomorphisms
	\cite{HomAlgebraTwists}. Gohr studied conditions under which
	Hom-associative structures with surjective twisting maps admit
	associative untwistings \cite{Gohr}.
	Cohomology and deformation theory provide further
	approaches to Hom-algebras \cite{AmmarEjbehiMakhlouf}, including
	$\alpha$-type Hochschild cohomology \cite{HurleMakhlouf}.
	For Hom-Lie algebras, Sheng introduced $\alpha^k$-derivations
	\cite{AlphaKDerivations}, and Li and Wang studied Hom-Lie algebras with
	derivations \cite{LiWang}. The relation between derivations,
	low-dimensional cohomology, abelian extensions, and crossed modules
	was studied by Casas and Garc\'ia-Mart\'inez \cite{CasasGarciaMartinez}.

	In this paper, we study the enveloping problem for a multiplicative Hom-Lie
	algebra equipped with a prescribed $\alpha$-derivation. Such a derivation
	commutes with the twisting map and satisfies a twisted Leibniz rule.
	The enveloping problem therefore asks for an extension of this derivation
	together with a universal property in which morphisms preserve it.
	When the twisting map is bijective, a further question is how the
	enveloping construction relates to untwisting and to the classical PBW
	filtration. In the classical setting, enveloping algebras for
	(modified) $\lambda$-differential Lie algebras were constructed by
	Peng, Zhang, Gao, and Luo \cite{PengZhangGaoLuo}.

	Free Hom-algebras provide a starting point for the enveloping problem.
	Yau constructed enveloping Hom-associative algebras using weighted trees
	\cite{HomLieEnveloping}, and Hellstr\"om, Makhlouf, and Silvestrov studied
	free Hom-associative algebras and the corresponding enveloping problem
	\cite{HellstromMakhloufSilvestrov}. In the involutive setting, free
	Hom-associative algebras were constructed by Zheng and Guo \cite{ZhengGuo}.
	Other enveloping constructions include the Hom-Hopf structure of
	Laurent-Gengoux, Makhlouf, and Teles \cite{LaurentGengouxMakhloufTeles}
	and the regular Hom-Poisson case studied by Hu \cite{Hu}.
	For the PBW problem, Guo, Zhang, and Zheng treated involutive Hom-Lie
	algebras \cite{GuoZhangZheng}. An enveloping construction and a PBW
	theorem for involutive color Hom-Lie algebras were obtained by Armakan,
	Silvestrov, and Farhangdoost \cite{ArmakanSilvestrovFarhangdoost}.
	Bai, Chen, and Zhang established a
	PBW theorem for multiplicative regular triangularizable Hom-Lie algebras
	\cite{BaiChenZhang}. A related result in representation theory is
	an analogue of the Ado theorem established by Makhlouf and Zusmanovich
	\cite{MakhloufZusmanovich} for finite-dimensional nilpotent regular
	Hom-Lie algebras over algebraically closed fields of characteristic zero.

	We work in the categories $\mathbf{HomLieDer}_\alpha$ and
	$\mathbf{HomAsDer}_\alpha$, whose morphisms preserve the algebraic
	operations, the twisting maps, and the chosen derivations. The
	commutator construction defines a functor
	$HLie_{\Der_\alpha}:\mathbf{HomAsDer}_\alpha\to
	\mathbf{HomLieDer}_\alpha$
	(Propositions~\ref{prop:commutator} and~\ref{taishezhijiandeduiying}).
	For each object $(\g,[\cdot,\cdot],\alpha,d)$ of
	$\mathbf{HomLieDer}_\alpha$, we construct an enveloping Hom-associative
	algebra $(\U_{\mathrm m}(\g),\mu_U,\alpha_U,D_U)$ carrying an extension
	of $d$ (Proposition~\ref{prop:Um-HomAsDer}). We prove that it satisfies
	the universal property in these categories (Theorem~\ref{thm:main}).
	Consequently, the enveloping functor is left adjoint to
	$HLie_{\Der_\alpha}$ (Corollary~\ref{cor:functor-adjunction}).

	The construction uses the free Hom-nonassociative algebra of
	\cite{HomLieEnveloping} and its multiplicative quotient.
	The algebra structure on a direct sum in Proposition~\ref{prop:semidirect-HNA}
	gives the unique extension of $d$
	to an $\alpha$-derivation on the free multiplicative
	Hom-nonassociative algebra. We then show that the ideal generated by
	the Hom-associativity and bracket--commutator relations is stable under
	both the twisting map and the extended derivation. Thus these maps
	descend to the enveloping algebra.

	Suppose now that $\alpha$ is bijective. Let
	$\g_0=(\g,[\cdot,\cdot]_0)$, where
	$[x,y]_0=\alpha^{-1}([x,y])$, and set $\delta=\alpha^{-1}\circ d$.
	Then $(\g_0,\delta)$ is a Lie algebra with a derivation
	(Proposition~\ref{prop:reg-lie}).
	We denote by $\U^+(\g_0)$ its nonunital universal enveloping
	algebra, and by $\widetilde{\alpha}$ and $\widetilde{\delta}$ the
	extensions of $\alpha$ and $\delta$, respectively. Define
	$a\cdot_\alpha b=\widetilde{\alpha}(ab)$ and
	$\widetilde d=\widetilde{\alpha}\circ\widetilde{\delta}$.
	We prove that there is a canonical isomorphism
	\[
	(\U_{\mathrm m}(\g),\mu_U,\alpha_U,D_U)
	\cong
	(\U^+(\g_0),\cdot_\alpha,\widetilde{\alpha},\widetilde d)
	\]
	in $\mathbf{HomAsDer}_\alpha$
	(Theorem~\ref{thm:regular-comparison}). In particular, this comparison
	preserves the twisting maps and the prescribed derivations.

	The Theorem~\ref{thm:regular-comparison} allows us to apply the classical PBW theorem
	to the untwisted product $u*v=\alpha_U^{-1}(uv)$ on
	$\U_{\mathrm m}(\g)$. For this product, the associated graded algebra
	is $S^+(\g_0)=\bigoplus_{p\geq1}S^p(\g_0)$. If $\g$ is
	finite-dimensional, the ordered monomials of positive length in the
	images of an ordered basis of $\g$, formed with respect to $*$,
	form a basis of $\U_{\mathrm m}(\g)$. Moreover, $\alpha_U$,
	$\Delta_U:=\alpha_U^{-1}\circ D_U$, and $D_U$ preserve the PBW
	filtration (Theorem~\ref{cor:regular-pbw}).
	The enveloping and untwisting constructions in the regular case are
	summarized in Figure~\ref{fig:regular-framework}.

	\begin{figure}[H]
		\centering
		\resizebox{0.7\linewidth}{!}{%
			\begin{tikzcd}[
				ampersand replacement=\&,
				row sep=4.8em,
				column sep=8.5em
				]
				(\g,[\cdot,\cdot],\alpha,d)
				\arrow[
				r,
				"\substack{
					\text{Untwisting}\\
					\text{Proposition~\ref{prop:reg-lie}}}"
				]
				\arrow[
				d,
				"\substack{
					\text{Enveloping}\\
					\text{Theorem~\ref{thm:main}}}"'
				]
				\&
				(\g_0,\delta)
				\arrow[
				d,
				"\substack{
					\text{Enveloping + twisting}\\
					\text{Proposition~\ref{prop:classical-yau-twist}}}"
				]
				\\
				(\U_{\mathrm m}(\g),\mu_U,\alpha_U,D_U)
				\arrow[
				r,
				no head,
				double,
				double distance=1.1pt,
				"\substack{
					\text{Canonical isomorphism}\\
					\text{Theorem~\ref{thm:regular-comparison}}}",
				"\cong"'
				]
				\arrow[
				d,
				"\substack{
					\text{Untwisting}\\
					\text{Lemma~\ref{lem:regularity-envelope} and}\\
					\text{Proposition~\ref{prop:reg-ass}}}"'
				]
				\&
				(\U^{+}(\g_0),\cdot_{\alpha},
				\widetilde{\alpha},\widetilde d)
				\arrow[
				d,
				"\substack{
					\text{Untwisting}\\
					\text{Proposition~\ref{prop:reg-ass}}}"
				]
				\\
				(\U_{\mathrm m}(\g),*,\Delta_U)
				\arrow[
				r,
				no head,
				double,
				double distance=1.1pt,
				"\substack{
					\text{Canonical isomorphism}\\
					\text{Theorem~\ref{thm:regular-comparison}}}",
				"\cong"'
				]
				\&
				(\U^{+}(\g_0),\cdot,\widetilde{\delta})
			\end{tikzcd}%
		}
		\caption{Enveloping algebras and untwisting in the regular case.}
		\label{fig:regular-framework}
	\end{figure}
	The paper is organized as follows. In Section~2, we recall the relevant
	definitions and introduce the categories and the commutator functor.
	In Section~3, we construct the enveloping algebra and prove its universal
	property. Section~4 establishes the comparison theorem and the PBW theorem
	in the regular case. Section~5 summarizes the results and records further
	problems.

	Throughout the paper, all vector spaces are defined over a fixed field $\kk$
	of characteristic $0$. Unless otherwise stated, Hom-Lie,
	Hom-associative, and Hom-nonassociative algebras are assumed to be
	multiplicative. We follow the conventions of \cite{AlphaKDerivations}
	for Hom-Lie algebras and \cite{MakhloufSilvestrov} for
	Hom-associative algebras. The term \emph{regular} means that the
	twisting map is bijective. Hom-associative and Hom-nonassociative
	algebras are taken to be nonunital unless otherwise stated, and
	morphisms are not required to preserve units.

\section{Preliminaries on Hom-Lie and Hom-associative algebras with \texorpdfstring{$\alpha$}{alpha}-derivations}
	
	In this section, we recall Hom-Lie and Hom-associative algebras,
	their $\alpha$-derivations and modules, and the corresponding categories.
	We give constructions using direct sums and twisting and describe the commutator
	functor.
	
	\subsection{Definitions and examples}
	
	\begin{definition}[{\cite[Definition~2.1]{AlphaKDerivations}}]
		A \textbf{Hom-Lie algebra} is a triple $(\g,[\cdot,\cdot],\alpha)$,
		where $\g$ is a $\kk$-vector space,
		$[\cdot,\cdot]:\g\times\g\to\g$ is a skew-symmetric bilinear map,
		and $\alpha:\g\to\g$ is a linear map satisfying the Hom-Jacobi identity
		$$
		[\alpha(x),[y,z]]+[\alpha(y),[z,x]]+[\alpha(z),[x,y]]=0, \qquad \forall x,y,z\in\g, 
		$$
		and
		\begin{equation}
			\alpha([x,y])=[\alpha(x),\alpha(y)], \qquad \forall x,y\in\g. \label{Liemultiplicativity}
		\end{equation}
		Identity~\eqref{Liemultiplicativity} is called the
		\textbf{multiplicativity condition}. A Hom-Lie algebra is called
		\textbf{regular} if $\alpha$ is bijective.
	\end{definition}
	
	\begin{remark}
		We use the term non-multiplicative Hom-Lie algebra when the
		multiplicativity condition is not imposed.
	\end{remark}
	
	\begin{definition}
		Let $(\g,[\cdot,\cdot],\alpha)$ be a Hom-Lie algebra. A linear map $d:\g\to\g$
		is called an \textbf{$\alpha$-derivation} if $d\circ\alpha=\alpha\circ d$
		and
		$$
		d([x,y])=[d(x),\alpha(y)]+[\alpha(x),d(y)], \qquad \forall x,y\in\g.
		$$
		The quadruple $(\g,[\cdot,\cdot],\alpha,d)$ is then called a
		\textbf{Hom-Lie algebra with an $\alpha$-derivation}.
	\end{definition}
	
	\begin{remark}
		This is the case $k=1$ of the $\alpha^k$-derivations introduced in
		\cite{AlphaKDerivations}.
	\end{remark}
	
	\begin{definition}[{\cite[Definition~1.1]{MakhloufSilvestrov}}]\label{def:hom-ass-nonass}
		A \textbf{Hom-nonassociative algebra} is a triple $(A,\mu,\alpha)$,
		where $A$ is a $\kk$-vector space, $\mu:A\times A\to A$ is a bilinear
		map, written $\mu(a,b)=ab$, and $\alpha:A\to A$ is a linear map such that
		\begin{equation}
		\alpha(ab)=\alpha(a)\alpha(b), \qquad \forall a,b\in A. \label{jiehemultiplicativity}
		\end{equation}
		Identity~\eqref{jiehemultiplicativity} is called the
		\textbf{multiplicativity condition}. A \textbf{Hom-associative algebra}
		is a Hom-nonassociative algebra satisfying the Hom-associativity identity
		$$
		\alpha(a)(bc)=(ab)\alpha(c), \qquad \forall a,b,c\in A. 
		$$
		It is called \textbf{regular} if $\alpha$ is bijective.
	\end{definition}
	
	\begin{remark}
		We use the term non-multiplicative Hom-nonassociative algebra when
		the multiplicativity condition is not imposed.
	\end{remark}
	
	Let $(A,\mu,\alpha)$ be a Hom-nonassociative algebra. Define the linear
	map $\widetilde{\alpha}:A\oplus A\to A\oplus A$ by
	$$
	\widetilde{\alpha}(a,u):=(\alpha(a),\alpha(u)),
	\qquad \forall (a,u)\in A\oplus A,
	$$
	and the bilinear map $\star:(A\oplus A)\times(A\oplus A)\to A\oplus A$ by
	$$
	(a,u)\star(b,v):=
	\bigl(ab,\,u\alpha(b)+\alpha(a)v\bigr),
	\qquad \forall (a,u),(b,v)\in A\oplus A.
	$$
	
	\begin{proposition}\label{prop:semidirect-HNA}
		The triple $(A\oplus A,\star,\widetilde{\alpha})$ is a Hom-nonassociative algebra.
	\end{proposition}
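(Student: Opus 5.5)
The plan is a direct verification, since by Definition~\ref{def:hom-ass-nonass} a Hom-nonassociative algebra requires only three things: a vector space, a bilinear product, and a linear twisting map that is multiplicative (no associativity-type identity is involved). I would first record that $A\oplus A$ is a $\kk$-vector space with componentwise operations. Next, $\widetilde{\alpha}$ is linear because $\alpha$ is. Finally, $\star$ is bilinear: each component of $(a,u)\star(b,v)$, namely $ab$ and $u\alpha(b)+\alpha(a)v$, is built from the bilinear map $\mu$ and the linear map $\alpha$. It is therefore linear in $(a,u)$ for fixed $(b,v)$ and vice versa.

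The only substantive step is the multiplicativity condition \eqref{jiehemultiplicativity} for $\widetilde{\alpha}$ with respect to $\star$. I would compute both sides for arbitrary $(a,u),(b,v)\in A\oplus A$. On the one hand,
\[
\widetilde{\alpha}\bigl((a,u)\star(b,v)\bigr)
=\bigl(\alpha(ab),\,\alpha(u\alpha(b))+\alpha(\alpha(a)v)\bigr).
\]
On the other hand,
\[
\widetilde{\alpha}(a,u)\star\widetilde{\alpha}(b,v)
=(\alpha(a),\alpha(u))\star(\alpha(b),\alpha(v))
=\bigl(\alpha(a)\alpha(b),\,\alpha(u)\alpha^2(b)+\alpha^2(a)\alpha(v)\bigr).
\]
Applying the multiplicativity of $\alpha$ on $A$ to each product on the first side gives $\alpha(ab)=\alpha(a)\alpha(b)$, $\alpha(u\alpha(b))=\alpha(u)\alpha^2(b)$ and $\alpha(\alpha(a)v)=\alpha^2(a)\alpha(v)$. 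Hence the two sides agree componentwise.

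No step is a genuine obstacle. The only point needing care is the second component: one must apply multiplicativity of $\alpha$ to products whose factors already carry an $\alpha$, which produces the $\alpha^2$ terms on both sides. Note that no Hom-associativity of $A$ is used, consistent with the statement being about Hom-nonassociative algebras.
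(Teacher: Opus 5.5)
Your proposal is correct and follows the paper's proof essentially verbatim: both note that linearity and bilinearity are immediate and then verify $\widetilde{\alpha}((a,u)\star(b,v))=\widetilde{\alpha}(a,u)\star\widetilde{\alpha}(b,v)$ componentwise using only the multiplicativity of $\alpha$, which produces the $\alpha^2$ terms. You are also right that no Hom-associativity of $A$ is needed.
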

	
	\begin{proof}
		The map $\widetilde{\alpha}$ is linear and $\star$ is bilinear by
		construction. To verify multiplicativity, let $(a,u),(b,v)\in A\oplus A$.
		Using multiplicativity of $\alpha$, we obtain
		\begin{align*}
			\widetilde{\alpha}((a,u)\star(b,v))
			&=\widetilde{\alpha}(ab,u\alpha(b)+\alpha(a)v)\\
			&=(\alpha(ab),\alpha(u\alpha(b)+\alpha(a)v))\\
			&=(\alpha(a)\alpha(b),\alpha(u)\alpha^2(b)+\alpha^2(a)\alpha(v))\\
			&=(\alpha(a),\alpha(u))\star(\alpha(b),\alpha(v))\\
			&=\widetilde{\alpha}(a,u)\star \widetilde{\alpha}(b,v).
		\end{align*}
		Therefore, $(A\oplus A,\star,\widetilde{\alpha})$ is a
		Hom-nonassociative algebra.
	\end{proof}
	
	Let $(\g,[\cdot,\cdot],\alpha)$ be a Hom-Lie algebra. Define the linear
	map $\widetilde{\alpha}:\g\oplus\g\to\g\oplus\g$ by
	$$
	\widetilde{\alpha}(x,u):=(\alpha(x),\alpha(u)), \qquad \forall (x,u)\in\g\oplus\g,
	$$
	and the bilinear map $[\cdot,\cdot]_{\g\oplus\g}:(\g\oplus\g)\times(\g\oplus\g)\to\g\oplus\g$ by
	$$
	[(x,u),(y,v)]_{\g\oplus\g}:=
	\bigl([x,y],\,[u,\alpha(y)]+[\alpha(x),v]\bigr), \qquad \forall (x,u),(y,v)\in\g\oplus\g.
	$$
	
	\begin{proposition}\label{prop:HomLie-direct-sum}
		The triple $(\g\oplus\g,[\cdot,\cdot]_{\g\oplus\g},\widetilde{\alpha})$ is a Hom-Lie algebra.
	\end{proposition}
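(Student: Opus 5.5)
We verify the three defining conditions directly from the definitions: bilinearity and skew-symmetry of $[\cdot,\cdot]_{\g\oplus\g}$, multiplicativity of $\widetilde{\alpha}$, and the Hom-Jacobi identity.

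Bilinearity is immediate, since each component of $[(x,u),(y,v)]_{\g\oplus\g}$ is bilinear. For skew-symmetry, the first component $[x,y]$ is skew because $[\cdot,\cdot]$ is. The second component of $[(y,v),(x,u)]_{\g\oplus\g}$ is $[v,\alpha(x)]+[\alpha(y),u]$. By skew-symmetry of $[\cdot,\cdot]$ this equals $-[\alpha(x),v]-[u,\alpha(y)]$, which is the negative of the second component of $[(x,u),(y,v)]_{\g\oplus\g}$. Multiplicativity is the same computation as in Proposition~\ref{prop:semidirect-HNA}. Applying $\widetilde{\alpha}$ to the bracket and using \eqref{Liemultiplicativity} gives
\[
\bigl([\alpha(x),\alpha(y)],[\alpha(u),\alpha^2(y)]+[\alpha^2(x),\alpha(v)]\bigr)
=[\widetilde{\alpha}(x,u),\widetilde{\alpha}(y,v)]_{\g\oplus\g}.
\]

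The Hom-Jacobi identity is the main step. Take $X=(x,u)$, $Y=(y,v)$, $Z=(z,w)$. The first component of $\circlearrowleft[\widetilde{\alpha}(X),[Y,Z]_{\g\oplus\g}]_{\g\oplus\g}$ is the Hom-Jacobi sum for $x,y,z$, so it vanishes. The second component of $[\widetilde{\alpha}(X),[Y,Z]_{\g\oplus\g}]_{\g\oplus\g}$ is
\[
[\alpha(u),\alpha([y,z])]+[\alpha^2(x),[v,\alpha(z)]+[\alpha(y),w]].
\]
By multiplicativity, this equals
\[
[\alpha(u),[\alpha(y),\alpha(z)]]+[\alpha^2(x),[v,\alpha(z)]]+[\alpha^2(x),[\alpha(y),w]].
\]
Since the whole expression is linear in $(u,v,w)$, we collect the terms after the cyclic sum by which of $u,v,w$ they contain. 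The terms involving $u$ are
\[
[\alpha(u),[\alpha(y),\alpha(z)]]+[\alpha^2(y),[\alpha(z),u]]+[\alpha^2(z),[u,\alpha(y)]].
\]
The second of these comes from the $w$-slot term of the cyclic shift $(y,z,x)$, and the third from the $v$-slot term of the shift $(z,x,y)$. These three terms are precisely the Hom-Jacobi identity applied to the triple $(u,\alpha(y),\alpha(z))$:
\[
[\alpha(u),[\alpha(y),\alpha(z)]]+[\alpha(\alpha(y)),[\alpha(z),u]]+[\alpha(\alpha(z)),[u,\alpha(y)]]=0.
\]
By cyclic symmetry, the terms involving $v$ and $w$ vanish in the same way, using the triples $(v,\alpha(z),\alpha(x))$ and $(w,\alpha(x),\alpha(y))$.

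The only delicate point is bookkeeping. One must check that each of the nine second-component terms is assigned to exactly one of the three groups, and that the $\alpha$-powers match. The twist by $\alpha$ on the mixed slots in the definition of the bracket is exactly what makes them match: it produces $\alpha^2$ on the outer entries, as Hom-Jacobi requires for the triple $(u,\alpha(y),\alpha(z))$.
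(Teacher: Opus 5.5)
Your proof is correct and follows the paper's argument. Multiplicativity is checked by the same direct computation, and the nine second-component terms of the Hom-Jacobi sum are regrouped, as in the paper, into the Hom-Jacobi sums in $\g$ for the triples $(u,\alpha(y),\alpha(z))$, $(v,\alpha(z),\alpha(x))$, $(w,\alpha(x),\alpha(y))$; your explicit verification of skew-symmetry, which the paper only asserts, is a harmless addition.
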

	
	\begin{proof}
		The bracket is bilinear and skew-symmetric. For
		$(x,u),(y,v),(z,w)\in\g\oplus\g$, multiplicativity follows from
		\begin{align*}
			\widetilde{\alpha}([(x,u),(y,v)]_{\g\oplus\g})
			&=\bigl(\alpha([x,y]),\alpha([u,\alpha(y)]+[\alpha(x),v])\bigr)\\
			&=\bigl([\alpha(x),\alpha(y)],
			[\alpha(u),\alpha^2(y)]+[\alpha^2(x),\alpha(v)]\bigr)\\
			&=[\widetilde{\alpha}(x,u),\widetilde{\alpha}(y,v)]_{\g\oplus\g}.
		\end{align*}
		For the Hom-Jacobi identity, we compute
		\begin{align*}
			&[\widetilde{\alpha}(x,u),[(y,v),(z,w)]_{\g\oplus\g}]_{\g\oplus\g}
			+[\widetilde{\alpha}(y,v),[(z,w),(x,u)]_{\g\oplus\g}]_{\g\oplus\g}\\
			&\quad
			+[\widetilde{\alpha}(z,w),[(x,u),(y,v)]_{\g\oplus\g}]_{\g\oplus\g}\\
			&=
			\left(
			\begin{array}{@{}l@{}}
				[\alpha(x),[y,z]]
				+[\alpha(y),[z,x]]
				+[\alpha(z),[x,y]],\\[2pt]
				\begin{aligned}
					&[\alpha(u),[\alpha(y),\alpha(z)]]
					+[\alpha^2(x),[v,\alpha(z)]]
					+[\alpha^2(x),[\alpha(y),w]]\\
					&+[\alpha(v),[\alpha(z),\alpha(x)]]
					+[\alpha^2(y),[w,\alpha(x)]]
					+[\alpha^2(y),[\alpha(z),u]]\\
					&+[\alpha(w),[\alpha(x),\alpha(y)]]
					+[\alpha^2(z),[u,\alpha(y)]]
					+[\alpha^2(z),[\alpha(x),v]]
				\end{aligned}
			\end{array}
			\right)\\
			&=
			\left(
			\begin{array}{@{}l@{}}
				0,\\[2pt]
				\begin{aligned}
					&\bigl([\alpha(u),[\alpha(y),\alpha(z)]]
					+[\alpha^2(y),[\alpha(z),u]]
					+[\alpha^2(z),[u,\alpha(y)]]\bigr)\\
					&+\bigl([\alpha(v),[\alpha(z),\alpha(x)]]
					+[\alpha^2(z),[\alpha(x),v]]
					+[\alpha^2(x),[v,\alpha(z)]]\bigr)\\
					&+\bigl([\alpha(w),[\alpha(x),\alpha(y)]]
					+[\alpha^2(x),[\alpha(y),w]]
					+[\alpha^2(y),[w,\alpha(x)]]\bigr)
				\end{aligned}
			\end{array}
			\right)\\
			&=(0,0).
		\end{align*}
		Here the first component and each parenthesized expression in the second
		component vanish by the Hom-Jacobi identity in $\g$. Therefore,
		$(\g\oplus\g,[\cdot,\cdot]_{\g\oplus\g},\widetilde{\alpha})$ is a
		Hom-Lie algebra.
	\end{proof}

	\begin{definition}\label{daidaozidehomnonassociativedaishu}
		Let $(A,\mu,\alpha)$ be a Hom-nonassociative algebra. A linear map $D:A\to A$
		is called an \textbf{$\alpha$-derivation} if $D\circ\alpha=\alpha\circ D$
		and
		$$
		D(ab)=D(a)\alpha(b)+\alpha(a)D(b), \qquad \forall a,b\in A.
		$$
		The quadruple $(A,\mu,\alpha,D)$ is then called a
		\textbf{Hom-nonassociative algebra with an $\alpha$-derivation}.
	\end{definition}
	
	\begin{definition}\label{def:hom-ideal}
		Let $(A,\mu,\alpha)$ be a Hom-nonassociative algebra. A
		\textbf{Hom-ideal} of $(A,\mu,\alpha)$ is a two-sided ideal $I$ of
		$(A,\mu)$ such that $\alpha(I)\subseteq I$.
	\end{definition}
	
	The following construction extends the twisting of Lie algebras to include
	a derivation.

	\begin{proposition}\label{diyigelizi}
		Let $(\g,[\cdot,\cdot]_0)$ be a Lie algebra, let $\delta:\g\to\g$
		be a derivation, and let $\alpha:\g\to\g$
		be a Lie algebra endomorphism such that
		$\alpha\circ\delta=\delta\circ\alpha$.
		Then $(\g,\alpha\circ[\cdot,\cdot]_0,\alpha)$ is a Hom-Lie algebra,
		and $\alpha\circ\delta$ is an $\alpha$-derivation of this algebra.
	\end{proposition}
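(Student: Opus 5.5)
We verify the axioms directly, writing $[x,y]:=\alpha([x,y]_0)$ and $d:=\alpha\circ\delta$. The argument has three parts: skew-symmetry and multiplicativity, the Hom-Jacobi identity, and the properties of $d$.

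\emph{Bilinearity, skew-symmetry and multiplicativity.} Bilinearity and skew-symmetry of $[\cdot,\cdot]$ follow at once from those of $[\cdot,\cdot]_0$ and linearity of $\alpha$. For multiplicativity, since $\alpha$ is a Lie algebra endomorphism,
\[
\alpha([x,y])=\alpha(\alpha([x,y]_0))=\alpha([\alpha(x),\alpha(y)]_0)=[\alpha(x),\alpha(y)].
\]

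\emph{Hom-Jacobi identity.} Using the endomorphism property again,
\[
[\alpha(x),[y,z]]=\alpha\bigl([\alpha(x),\alpha([y,z]_0)]_0\bigr)=\alpha^2\bigl([x,[y,z]_0]_0\bigr).
\]
Summing cyclically gives $\alpha^2$ applied to the ordinary Jacobiator of $[\cdot,\cdot]_0$, which vanishes. This is the standard Yau twisting argument, so $(\g,\alpha\circ[\cdot,\cdot]_0,\alpha)$ is a Hom-Lie algebra.

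\emph{The map $d$ is an $\alpha$-derivation.} Commutation with $\alpha$ is immediate from $\alpha\circ\delta=\delta\circ\alpha$:
\[
d\circ\alpha=\alpha\circ\delta\circ\alpha=\alpha\circ\alpha\circ\delta=\alpha\circ d.
\]
For the twisted Leibniz rule, compute the left side using that $\delta$ is a derivation of $[\cdot,\cdot]_0$:
\[
d([x,y])=\alpha\delta\alpha([x,y]_0)=\alpha^2\delta([x,y]_0)=\alpha^2\bigl([\delta x,y]_0+[x,\delta y]_0\bigr).
\]
On the right side,
\[
[d(x),\alpha(y)]=\alpha([\alpha\delta x,\alpha y]_0)=\alpha^2([\delta x,y]_0),
\]
and symmetrically $[\alpha(x),d(y)]=\alpha^2([x,\delta y]_0)$. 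The two sides agree.

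The only point needing care is the order in which the endomorphism property of $\alpha$ and the commutation $\alpha\delta=\delta\alpha$ are used to pull every $\alpha$ outside as $\alpha^2$. Once both sides are written as $\alpha^2$ of an untwisted expression, each identity reduces to its classical counterpart, so no genuine obstacle arises.
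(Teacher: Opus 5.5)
Your proof is correct and follows the paper's approach: you verify the twisted Leibniz rule with the same computation, rewriting both sides as $\alpha^2$ of an untwisted expression using the endomorphism property of $\alpha$ and $\alpha\delta=\delta\alpha$. The only difference is that you check skew-symmetry, multiplicativity, and the Hom-Jacobi identity directly, whereas the paper cites Yau's twisting result for that part.
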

	
	\begin{proof}
		Set $[\cdot,\cdot]_\alpha:=\alpha\circ[\cdot,\cdot]_0$ and
		$d:=\alpha\circ\delta$. By \cite[Corollary~2.5]{HomAlgebraTwists},
		$(\g,[\cdot,\cdot]_\alpha,\alpha)$ is a Hom-Lie algebra. The relation
		$\alpha\delta=\delta\alpha$ gives
		$d\alpha=\alpha\delta\alpha=\alpha^2\delta=\alpha d$.
		Since $\delta$ is a derivation of $(\g,[\cdot,\cdot]_0)$, for
		$x,y\in\g$ we obtain
		\begin{align*}
			d([x,y]_\alpha)
			&=
			\alpha\delta(\alpha([x,y]_0))
			=
			\alpha^2\delta([x,y]_0)\\
			&= \alpha^2([\delta(x),y]_0+[x,\delta(y)]_0)\\
			&= \alpha([\alpha\delta(x),\alpha(y)]_0+[\alpha(x),\alpha\delta(y)]_0)\\
			&= [\alpha\delta(x),\alpha(y)]_\alpha+[\alpha(x),\alpha\delta(y)]_\alpha\\
			&= [d(x),\alpha(y)]_\alpha+[\alpha(x),d(y)]_\alpha.
		\end{align*}
		Thus $(\g,[\cdot,\cdot]_\alpha,\alpha,d)$ is a Hom-Lie algebra with an
		$\alpha$-derivation.
	\end{proof}

	\begin{example}
		Let $\h$ be the Heisenberg Lie algebra over $\kk$ with basis $x,y,z$
		and brackets $[x,y]_0=z$, $[x,z]_0=[y,z]_0=0$. Define
		$\alpha,\delta:\h\to\h$ by
		\[
		\begin{aligned}
		\alpha(x)&=2x, &\alpha(y)&=3y, &\alpha(z)&=6z,\\
		\delta(x)&=x, &\delta(y)&=2y, &\delta(z)&=3z.
		\end{aligned}
		\]
		By Proposition~\ref{diyigelizi}, $(\h,[\cdot,\cdot]_\alpha,\alpha,d)$
		is a regular Hom-Lie algebra with an $\alpha$-derivation, where
		$[u,v]_\alpha:=\alpha([u,v]_0)$ and $d:=\alpha\circ\delta$.
		Explicitly,
		\[
		[x,y]_\alpha=6z,\qquad
		d(x)=2x,\quad d(y)=6y,\quad d(z)=18z.
		\]
	\end{example}
	
	The same construction applies to associative algebras.

	\begin{proposition}
		Let $(A,\ast)$ be an associative algebra, let $\Delta:A\to A$ be a
		derivation, and let $\alpha:A\to A$ be an algebra endomorphism satisfying
		$\alpha\Delta=\Delta\alpha$.
		Then $(A,\alpha\circ\ast,\alpha)$ is a Hom-associative algebra,
		and $\alpha\circ\Delta$ is an $\alpha$-derivation of this algebra.
	\end{proposition}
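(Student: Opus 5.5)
The plan mirrors the proof of Proposition~\ref{diyigelizi}, with the Lie bracket replaced by the associative product. Write $a\cdot_\alpha b:=\alpha(a\ast b)$ and $D:=\alpha\circ\Delta$. There are three things to check, in this order: the Hom-nonassociative axioms (bilinearity and multiplicativity), Hom-associativity, and the two conditions for $D$ to be an $\alpha$-derivation in the sense of Definition~\ref{daidaozidehomnonassociativedaishu}.

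\textbf{Multiplicativity.} Bilinearity of $\cdot_\alpha$ is clear. Since $\alpha$ is an algebra endomorphism of $(A,\ast)$,
\[
\alpha(a\cdot_\alpha b)=\alpha(\alpha(a)\ast\alpha(b))=\alpha(a)\cdot_\alpha\alpha(b).
\]

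\textbf{Hom-associativity.} This is Yau's twisting principle, and it could also be cited from \cite{HomAlgebraTwists} as in the Lie case. Directly, using multiplicativity of $\alpha$ and then associativity of $\ast$:
\[
\alpha(a)\cdot_\alpha(b\cdot_\alpha c)=\alpha\bigl(\alpha(a)\ast\alpha(b\ast c)\bigr)=\alpha^2\bigl(a\ast(b\ast c)\bigr),
\]
\[
(a\cdot_\alpha b)\cdot_\alpha\alpha(c)=\alpha\bigl(\alpha(a\ast b)\ast\alpha(c)\bigr)=\alpha^2\bigl((a\ast b)\ast c\bigr).
\]
The two right-hand sides agree.

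\textbf{The derivation $D$.} Commutation is immediate: $D\alpha=\alpha\Delta\alpha=\alpha^2\Delta=\alpha D$. For the twisted Leibniz rule, expand
\[
D(a\cdot_\alpha b)=\alpha\Delta\alpha(a\ast b)=\alpha^2\Delta(a\ast b)=\alpha^2\bigl(\Delta(a)\ast b+a\ast\Delta(b)\bigr).
\]
Then distribute one factor of $\alpha$ inside the product:
\[
\alpha^2\bigl(\Delta(a)\ast b\bigr)=\alpha\bigl(\alpha\Delta(a)\ast\alpha(b)\bigr)=D(a)\cdot_\alpha\alpha(b),
\]
and in the same way $\alpha^2\bigl(a\ast\Delta(b)\bigr)=\alpha(a)\cdot_\alpha D(b)$. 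Adding the two terms gives
\[
D(a\cdot_\alpha b)=D(a)\cdot_\alpha\alpha(b)+\alpha(a)\cdot_\alpha D(b).
\]

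No step is a real obstacle. The only point needing care is the Leibniz computation: one must use $\alpha\Delta=\Delta\alpha$ to move $\Delta$ past $\alpha$, and must split $\alpha^2$ as $\alpha\circ\alpha$ with the inner $\alpha$ distributed over $\ast$. This is exactly where the hypothesis that $\alpha$ is an algebra endomorphism, and not merely a linear map, is used.
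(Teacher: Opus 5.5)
Your proof is correct and follows the paper's route. Like the paper, you set $D=\alpha\circ\Delta$, get $D\alpha=\alpha D$ from $\alpha\Delta=\Delta\alpha$, and derive the twisted Leibniz rule from the Leibniz rule for $\Delta$ together with multiplicativity of $\alpha$. The only difference is that the paper cites Yau's twisting corollary for multiplicativity and Hom-associativity of $(A,\alpha\circ\ast,\alpha)$, while you verify these two identities directly. That makes your argument self-contained but is not a different approach.
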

	
	\begin{proof}
		Set $\mu(a,b):=\alpha(a\ast b)$ for $a,b\in A$ and
		$D:=\alpha\circ\Delta$. By \cite[Corollary~2.5]{HomAlgebraTwists},
		$(A,\mu,\alpha)$ is a Hom-associative algebra. The relation
		$\alpha\Delta=\Delta\alpha$ gives $D\alpha=\alpha D$.
		Together with the Leibniz rule for $\Delta$, it also gives
		$D(ab)=D(a)\alpha(b)+\alpha(a)D(b)$.
	\end{proof}

	\subsection{Hom-modules and Hom-Lie (associative) modules}\label{subsec:modules}
	
	\begin{definition}[{\cite[Section~3.1]{HomLieEnveloping}}]\label{def:hom-module}
		A \textbf{Hom-module} is a pair $(V,\alpha_V)$ consisting of a
		$\kk$-vector space $V$ and a linear map $\alpha_V:V\to V$. A
		\textbf{morphism} of Hom-modules from $(V,\alpha_V)$ to
		$(W,\alpha_W)$ is a linear map $f:V\to W$ such that
		$f\circ\alpha_V=\alpha_W\circ f$.
	\end{definition}
	
	\begin{definition}[{\cite[Definition~4.1]{AlphaKDerivations}}]\label{def:hom-lie-module}
		Let $(\g,[\cdot,\cdot],\alpha)$ be a Hom-Lie algebra. A
		\textbf{Hom-Lie module} over $\g$ is a Hom-module $(V,\alpha_V)$ equipped
		with a linear map $\rho:\g\to \operatorname{End}_{\kk}(V)$
		such that
		$$
		\rho(\alpha(x))\circ\alpha_V=\alpha_V\circ\rho(x), \qquad \forall x\in\g, 
		$$
		and
		$$
		\rho([x,y])\circ\alpha_V
		=\rho(\alpha(x))\circ\rho(y)-\rho(\alpha(y))\circ\rho(x), \qquad \forall x,y\in\g. 
		$$
	\end{definition}
	
    \begin{definition}\label{def:hom-ass-module}
        Let $(A,\mu,\alpha)$ be a Hom-associative algebra. A
        \textbf{left Hom-associative module} over $A$ is a Hom-module
        $(M,\alpha_M)$ equipped with a bilinear map
        $A \times M\to M, (a, m)\mapsto a\cdot m,$ satisfying
        $$
        \alpha_M(a\cdot m)=\alpha(a)\cdot\alpha_M(m), \qquad \forall a\in A,\ m\in M, 
        $$
        and
        $$
        (ab)\cdot\alpha_M(m)=\alpha(a)\cdot(b\cdot m), \qquad \forall a,b\in A,\ m\in M.
        $$
    \end{definition}

	\subsection{Categories and the commutator functor}
	
	\begin{definition}
		We denote by $\mathbf{HomLieDer}_\alpha$ the category whose objects are Hom-Lie
		algebras $(\g,[\cdot,\cdot],\alpha,d)$ with an $\alpha$-derivation. A
		morphism
		$f:(\g,[\cdot,\cdot],\alpha,d)\to
		(\h,[\cdot,\cdot]_{\h},\beta,\delta)$ is a linear map $f:\g\to\h$
		satisfying
		$$
		f([x,y])=[f(x),f(y)]_{\h},\qquad
		f\circ\alpha=\beta\circ f,\qquad
		f\circ d=\delta\circ f, \qquad \forall x,y\in\g. 
		$$
	\end{definition}
	
	\begin{definition}
		We denote by $\mathbf{HomAsDer}_\alpha$ the category whose objects are
		Hom-associative algebras $(A,\mu,\alpha,D)$ with an
		$\alpha$-derivation. A morphism
		$F:(A,\mu,\alpha,D)\to(B,\nu,\beta,E)$ is a linear map $F:A\to B$
		satisfying
		$$
		F(\mu(a,b))=\nu(F(a),F(b)),\qquad
		F\circ\alpha=\beta\circ F,\qquad
		F\circ D=E\circ F, \qquad \forall a,b\in A. 
		$$
	\end{definition}
	
	Let $(A,\mu,\alpha,D)$ be an object of $\mathbf{HomAsDer}_\alpha$.
	Define the bilinear map $[\cdot,\cdot]_A:A\times A\to A$ by
	$$
	[a,b]_A:=ab-ba, \qquad \forall a,b\in A.
	$$
	
	\begin{proposition}\label{prop:commutator}
		The quadruple $(A,[\cdot,\cdot]_A,\alpha,D)$ is an object of $\mathbf{HomLieDer}_\alpha$.
	\end{proposition}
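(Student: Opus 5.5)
We must verify four things: the commutator bracket is bilinear and skew-symmetric, $\alpha$ is multiplicative for it, the Hom-Jacobi identity holds, and $D$ is an $\alpha$-derivation for it. The condition $D\circ\alpha=\alpha\circ D$ is already part of the hypothesis on $(A,\mu,\alpha,D)$. Bilinearity and skew-symmetry of $[a,b]_A=ab-ba$ are immediate.

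Multiplicativity follows from \eqref{jiehemultiplicativity}:
\[
\alpha([a,b]_A)=\alpha(a)\alpha(b)-\alpha(b)\alpha(a)=[\alpha(a),\alpha(b)]_A.
\]
The derivation property is a direct computation using the Leibniz rule of Definition~\ref{daidaozidehomnonassociativedaishu}:
\begin{align*}
D([a,b]_A)&=D(a)\alpha(b)+\alpha(a)D(b)-D(b)\alpha(a)-\alpha(b)D(a)\\
&=[D(a),\alpha(b)]_A+[\alpha(a),D(b)]_A.
\end{align*}

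The only substantive step is the Hom-Jacobi identity $\circlearrowleft_{a,b,c}[\alpha(a),[b,c]_A]_A=0$. This is the standard result of \cite{MakhloufSilvestrov} that the commutator of a Hom-associative algebra is Hom-Lie, and I would either cite it or reprove it as follows. Expanding $[\alpha(a),[b,c]_A]_A$ gives the four terms
\[
\alpha(a)(bc)-\alpha(a)(cb)-(bc)\alpha(a)+(cb)\alpha(a).
\]
Summing cyclically over $(a,b,c)$ produces twelve terms: six of the form $\alpha(x)(yz)$ and six of the form $(yz)\alpha(x)$, with signs. Applying Hom-associativity $\alpha(x)(yz)=(xy)\alpha(z)$ converts every term of the first kind into one of the second kind. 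The remaining check, which is the one place requiring care, is that the twelve resulting terms of the form $(xy)\alpha(z)$ cancel in pairs.

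I would confirm this by tracking each ordered triple $(x,y,z)$ that is a permutation of $(a,b,c)$. Each permutation appears exactly twice, once from the $\alpha(\cdot)(\cdot\cdot)$ terms after rewriting and once from the $(\cdot\cdot)\alpha(\cdot)$ terms, always with opposite signs. For example, $\alpha(a)(bc)$ becomes $(ab)\alpha(c)$, which cancels against $-(ab)\alpha(c)$ coming from $-[\ldots]$ in the cyclic term for $c$. Hence the sum vanishes, and $(A,[\cdot,\cdot]_A,\alpha,D)$ is an object of $\mathbf{HomLieDer}_\alpha$.
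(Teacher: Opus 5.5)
Your proposal is correct and follows the paper's proof. The paper obtains the Hom-Lie structure of the commutator bracket by citing Makhlouf--Silvestrov (Proposition~1.6), then checks $D\alpha=\alpha D$ and the twisted Leibniz rule by exactly your expansion of $D(ab-ba)$; your optional direct proof of the Hom-Jacobi identity is also sound, since after rewriting $\alpha(x)(yz)=(xy)\alpha(z)$ each of the six products $(xy)\alpha(z)$ appears once with each sign.
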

	
	\begin{proof}
		By \cite[Proposition~1.6]{MakhloufSilvestrov},
		$(A,[\cdot,\cdot]_A,\alpha)$ is a Hom-Lie algebra. It remains to verify
		that $D$ is an $\alpha$-derivation of the commutator bracket. The relation
		$D\alpha=\alpha D$ holds because $(A,\mu,\alpha,D)$ is an object of
		$\mathbf{HomAsDer}_\alpha$. For any $a,b\in A$, the twisted Leibniz rule gives
		\begin{align*}
			D([a,b]_A)
			&=D(ab-ba)\\
			&=D(ab)-D(ba)\\
			&=D(a)\alpha(b)+\alpha(a)D(b)
			-D(b)\alpha(a)-\alpha(b)D(a)\\
			&=(D(a)\alpha(b)-\alpha(b)D(a))+(\alpha(a)D(b)-D(b)\alpha(a))\\
			&=[D(a),\alpha(b)]_A+[\alpha(a),D(b)]_A.
		\end{align*}
		Therefore, $(A,[\cdot,\cdot]_A,\alpha,D)$ is an object of
		$\mathbf{HomLieDer}_\alpha$.
	\end{proof}
	
	\begin{proposition}\label{taishezhijiandeduiying}
		If $F:(A,\mu,\alpha,D)\to(B,\nu,\beta,E)$ is a morphism in
		$\mathbf{HomAsDer}_\alpha$, then $F$ is also a morphism
		$(A,[\cdot,\cdot]_A,\alpha,D)\to
		(B,[\cdot,\cdot]_B,\beta,E)$ in $\mathbf{HomLieDer}_\alpha$.
	\end{proposition}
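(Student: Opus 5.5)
The plan is a direct verification of the three conditions defining a morphism in $\mathbf{HomLieDer}_\alpha$. By Proposition~\ref{prop:commutator}, both $(A,[\cdot,\cdot]_A,\alpha,D)$ and $(B,[\cdot,\cdot]_B,\beta,E)$ are objects of $\mathbf{HomLieDer}_\alpha$. It therefore remains to check that the same linear map $F:A\to B$ satisfies
\[
F([a,b]_A)=[F(a),F(b)]_B,\qquad F\circ\alpha=\beta\circ F,\qquad F\circ D=E\circ F .
\]

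The second and third conditions need no work. They are exactly the compatibility conditions already imposed on $F$ as a morphism in $\mathbf{HomAsDer}_\alpha$, since the twisting maps and the derivations are unchanged when passing to the commutator algebras.

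For the first condition, I will use linearity of $F$ and multiplicativity with respect to $\mu$ and $\nu$. For $a,b\in A$,
\[
F([a,b]_A)=F(\mu(a,b))-F(\mu(b,a))=\nu(F(a),F(b))-\nu(F(b),F(a))=[F(a),F(b)]_B .
\]

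There is no genuine obstacle here. The only point requiring care is to note that the commutator construction uses the same underlying maps $\alpha$, $D$ (respectively $\beta$, $E$), so that no additional compatibility has to be verified. Together with the evident preservation of identities and composition, this will also show that the commutator construction defines the functor $HLie_{\Der_\alpha}$.
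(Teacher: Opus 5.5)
Your proposal is correct and follows the paper's proof: the bracket condition follows from $F(ab-ba)=F(a)F(b)-F(b)F(a)$. Compatibility with the twisting maps and the derivations is inherited directly from $F$ being a morphism in $\mathbf{HomAsDer}_\alpha$.
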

	
	\begin{proof}
		For all $a,b\in A$, we have
		$$
		F([a,b]_A)=F(ab-ba)=F(a)F(b)-F(b)F(a)=[F(a),F(b)]_B.
		$$
		Since $F$ is a morphism in $\mathbf{HomAsDer}_\alpha$, we also have
		$F\circ\alpha=\beta\circ F$ and $F\circ D=E\circ F$. Hence $F$ is a
		morphism in $\mathbf{HomLieDer}_\alpha$.
	\end{proof}
	
	By Propositions~\ref{prop:commutator} and~\ref{taishezhijiandeduiying},
	the commutator construction applies to both objects and morphisms.
	We denote this assignment by $HLie_{\Der_\alpha}$. Since identity maps
	and compositions are unchanged, we obtain the following result.
	
	\begin{proposition}
		The assignment $HLie_{\Der_\alpha}$ is a functor from
		$\mathbf{HomAsDer}_\alpha$ to $\mathbf{HomLieDer}_\alpha$.
	\end{proposition}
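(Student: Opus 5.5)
To prove that $HLie_{\Der_\alpha}$ is a functor, we check the functor axioms directly, using the two preceding propositions for well-definedness. Proposition~\ref{prop:commutator} shows that $HLie_{\Der_\alpha}$ sends every object $(A,\mu,\alpha,D)$ of $\mathbf{HomAsDer}_\alpha$ to an object $(A,[\cdot,\cdot]_A,\alpha,D)$ of $\mathbf{HomLieDer}_\alpha$. Proposition~\ref{taishezhijiandeduiying} shows that it sends every morphism $F$ of $\mathbf{HomAsDer}_\alpha$ to a morphism of $\mathbf{HomLieDer}_\alpha$ with the same underlying linear map. So the assignment is well defined on objects and on morphisms. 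The domain and codomain are correct: $HLie_{\Der_\alpha}(F)$ goes from $HLie_{\Der_\alpha}(A,\mu,\alpha,D)$ to $HLie_{\Der_\alpha}(B,\nu,\beta,E)$, because both the underlying space and the twisting and derivation data are carried over unchanged.

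It remains to check the two functor axioms. In both categories, morphisms are linear maps subject to conditions, composition is ordinary composition of maps, and identities are identity maps. For the identity axiom, $HLie_{\Der_\alpha}(\id_A)$ is the linear map $\id_A$. This is the identity morphism of $(A,[\cdot,\cdot]_A,\alpha,D)$ in $\mathbf{HomLieDer}_\alpha$. For composition, take $F:(A,\mu,\alpha,D)\to(B,\nu,\beta,E)$ and $G:(B,\nu,\beta,E)\to(C,\pi,\gamma,H)$. Then $G\circ F$ is a morphism in $\mathbf{HomAsDer}_\alpha$. Its image under $HLie_{\Der_\alpha}$ is the same linear map $G\circ F$, which equals $HLie_{\Der_\alpha}(G)\circ HLie_{\Der_\alpha}(F)$.

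There is no genuine obstacle. The only point needing care is that the composite is again a morphism in $\mathbf{HomAsDer}_\alpha$, which holds because the three defining conditions are preserved under composition. For example, $G F\alpha=G\beta F=\gamma G F$, and the same computation works for the derivations. Multiplicativity of $GF$ is immediate. This confirms that $\mathbf{HomAsDer}_\alpha$ is a category and that $HLie_{\Der_\alpha}$ is a functor.
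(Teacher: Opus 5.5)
Your proof is correct and matches the paper's argument: objects and morphisms are handled by Propositions~\ref{prop:commutator} and~\ref{taishezhijiandeduiying}, and the functor axioms hold because identity maps and composites are unchanged. Your extra check that composites of $\mathbf{HomAsDer}_\alpha$-morphisms are again morphisms is harmless, but it concerns $\mathbf{HomAsDer}_\alpha$ being a category rather than functoriality itself.
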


	\section{Universal enveloping algebras of Hom-Lie algebras with \texorpdfstring{$\alpha$}{alpha}-derivations}
	
	In this section, we construct the universal enveloping algebra of a Hom-Lie
	algebra with an $\alpha$-derivation. We first construct the free
	Hom-nonassociative algebra by imposing multiplicativity on a weighted-tree
	algebra. We then extend the derivation and pass to a Hom-associative quotient.
	
	\subsection{The free non-multiplicative Hom-nonassociative algebra}
	
	We recall the weighted-tree notation of \cite[Section~2]{HomLieEnveloping}
	and use it to describe the free Hom-nonassociative algebra without imposing
	multiplicativity, following \cite[Theorem~1]{HomLieEnveloping}.
	
	For $n\ge 1$, let $T_n$ be the set of planar binary trees with $n$ leaves and
	one root. An element of $T_n$ is called an $n$-tree, and its leaves are labeled
	from left to right by $x_1,\ldots,x_n$. We denote the unique element of $T_1$
	by $i$. If $\psi\in T_p$ and $\varphi\in T_q$, where $p,q\ge 1$, their
	\emph{grafting} is the $(p+q)$-tree
	$\psi\vee\varphi\in T_{p+q}$ obtained by joining the two roots to a new root.
	
	For $n=1,2,3$, we have
	$$
	T_1=\{i\}, \qquad T_2=\{i\vee i\}, \qquad T_3=\{(i\vee i)\vee i,
	\; i\vee(i\vee i)\}.
	$$
	The trees are depicted below. The two elements of $T_3$ correspond to the
	products $(x_1x_2)x_3$ and $x_1(x_2x_3)$, respectively.
	
	\begin{center}
		\begin{tikzpicture}[x=0.7cm,y=0.8cm,line width=0.45pt]
			\draw (-5,0)--(-5,0.9);
			\node at (-5,-0.45) {$i$};
			
			\draw (-2.8,0)--(-2.8,0.55);
			\fill (-2.8,0.55) circle (1.3pt);
			\draw (-2.8,0.55)--(-3.15,1.1);
			\draw (-2.8,0.55)--(-2.45,1.1);
			\node at (-2.8,-0.45) {$i\vee i$};
			
			\draw (0,0)--(0,0.55);
			\fill (0,0.55) circle (1.3pt);
			\draw (0,0.55)--(-0.45,1.1);
			\draw (0,0.55)--(0.35,1.1);
			\fill (-0.45,1.1) circle (1.3pt);
			\draw (-0.45,1.1)--(-0.75,1.65);
			\draw (-0.45,1.1)--(-0.15,1.65);
			\node at (0,-0.45) {$(i\vee i)\vee i$};
			
			\draw (3.5,0)--(3.5,0.55);
			\fill (3.5,0.55) circle (1.3pt);
			\draw (3.5,0.55)--(3.15,1.1);
			\draw (3.5,0.55)--(3.95,1.1);
			\fill (3.95,1.1) circle (1.3pt);
			\draw (3.95,1.1)--(3.65,1.65);
			\draw (3.95,1.1)--(4.25,1.65);
			\node at (3.5,-0.45) {$i\vee(i\vee i)$};
		\end{tikzpicture}
	\end{center}
	
	A weighted $n$-tree is a pair $\tau=(\psi,w)$, where $\psi\in T_n$ and $w$
	assigns a nonnegative integer to each internal vertex of $\psi$. We denote the
	set of weighted $n$-trees by $T_n^{\mathrm{wt}}$. Since the $1$-tree has no
	internal vertex, $T_1^{\mathrm{wt}}=T_1$. For an internal vertex $v$, the
	integer $w(v)$ records how many times the twisting map is applied at $v$.
	
	If $\tau\in T_p^{\mathrm{wt}}$ and $\sigma\in T_q^{\mathrm{wt}}$, their
	weighted grafting $\tau\vee\sigma\in T_{p+q}^{\mathrm{wt}}$ is obtained by
	grafting the underlying trees and assigning weight $0$ to the new lowest
	internal vertex. For $m\ge 0$ and $\tau\in T_n^{\mathrm{wt}}$ with $n\ge 2$,
	we write $\tau[m]$ for the tree obtained by adding $m$ to the weight of the
	lowest internal vertex, with all other weights unchanged. Thus
	every weighted $n$-tree with $n\ge 2$ admits a unique decomposition
	$\tau=(\tau_1\vee\tau_2)[r]$, where
	$\tau_1\in T_p^{\mathrm{wt}}$, $\tau_2\in T_q^{\mathrm{wt}}$, $p+q=n$, and
	$r$ is the weight of the lowest internal vertex; see
	\cite[Sections~2.3--2.5]{HomLieEnveloping}.
	
	For example, the following diagram represents $\tau=((i\vee i)[r]\vee i)[s].$
	
	\begin{center}
		\begin{tikzpicture}[x=0.8cm,y=0.8cm,line width=0.45pt]
			\draw (0,0)--(0,0.55);
			\fill (0,0.55) circle (1.3pt);
			\draw (0,0.55)--(-0.45,1.1);
			\draw (0,0.55)--(0.35,1.1);
			\fill (-0.45,1.1) circle (1.3pt);
			\draw (-0.45,1.1)--(-0.75,1.65);
			\draw (-0.45,1.1)--(-0.15,1.65);
			\node[left] at (-0.48,1.1) {$r$};
			\node[left] at (0.02,0.55) {$s$};
			\node at (0,-0.45) {$\tau=((i\vee i)[r]\vee i)[s]$};
		\end{tikzpicture}
	\end{center}
	
	Let $(A,\mu_A,\alpha_A)$ be a Hom-nonassociative algebra without a
	multiplicativity assumption. Given a weighted tree $\tau$ and elements
	$a_1,\ldots,a_n\in A$, we define $(a_1\cdots a_n)_\tau\in A$ recursively.
	For $n=1$, set $(a)_i=a$. If $\tau=(\tau_1\vee\tau_2)[r]$ with $\tau_1\in T_p^{\mathrm{wt}}$, then
	$$
	(a_1\cdots a_n)_\tau
	=\alpha_A^r((a_1\cdots a_p)_{\tau_1}(a_{p+1}\cdots a_n)_{\tau_2}).
	$$
	Thus we evaluate the two subtrees, multiply their values, and apply
	$\alpha_A^r$ at the lowest internal vertex. For example,
	$$
	(ab)_{(i\vee i)[r]}=\alpha_A^r(ab),
	$$
	$$
	(abc)_{((i\vee i)[r]\vee i)[s]}
	=\alpha_A^s(\alpha_A^r(ab)c),
	$$
	$$
	(abc)_{(i\vee(i\vee i)[r])[s]}
	=\alpha_A^s(a\alpha_A^r(bc)),
	$$
	and, for the weighted tree drawn above,
	$$
	(a_1a_2a_3)_\tau=\alpha_A^s(\alpha_A^r(a_1a_2)a_3).
	$$
	The underlying tree therefore determines the parenthesization, and the
	weights determine the positions and powers of $\alpha_A$.
	
	With this notation, we recall the free construction of
	\cite[Theorem~1]{HomLieEnveloping}.
	
	Let $(V,\alpha_V)$ be a Hom-module. For each $\tau\in T_n^{\mathrm{wt}}$,
	let $V^{\otimes n}_{\tau}$ be a copy of $V^{\otimes n}$. For
	$x_1,\ldots,x_n\in V$, write $(x_{1,n})_{\tau}:=(x_1\otimes\cdots\otimes x_n)_{\tau} \in V^{\otimes
	n}_{\tau}$. Set
	$$
	F_{\mathrm{HNAs}}(V):=
	\bigoplus_{n\ge 1}\bigoplus_{\tau\in T_n^{\mathrm{wt}}}V^{\otimes n}_{\tau}.
	$$
	Thus $F_{\mathrm{HNAs}}(V)$ is spanned by products of elements of $V$ indexed
	by weighted trees.
	
	Define a bilinear product $\mu_F$ by
	$$
	\mu_F((x_{1,n})_\tau,(x_{n+1,n+m})_\sigma)
	=(x_{1,n+m})_{\tau\vee\sigma},
	$$
	and define a linear map $\alpha_F$ by
	$$
	\alpha_F|_V=\alpha_V,
	\qquad
	\alpha_F((x_{1,n})_\tau)=(x_{1,n})_{\tau[1]}
	\quad (n\ge 2).
	$$
	Multiplication corresponds to grafting. On a word of length at least two,
	$\alpha_F$ increases the weight of the lowest internal vertex by $1$.
	We denote the natural inclusion by $i:V\to F_{\mathrm{HNAs}}(V)$.
	
	\begin{proposition}[{\cite[Theorem~1]{HomLieEnveloping}}]\label{prop:FHNAs-recall}
		Let $(V,\alpha_V)$ be a Hom-module. Then
		$(F_{\mathrm{HNAs}}(V),\mu_F,\alpha_F)$ is the free non-multiplicative
		Hom-nonassociative algebra on $(V,\alpha_V)$. More precisely, for every
		non-multiplicative Hom-nonassociative algebra $(A,\mu_A,\alpha_A)$ and
		every Hom-module morphism $f:V\to A$, there exists a unique
		Hom-nonassociative algebra morphism $g:F_{\mathrm{HNAs}}(V)\to A$ such that $g\circ i=f$.
	\end{proposition}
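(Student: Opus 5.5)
We have to prove that $(F_{\mathrm{HNAs}}(V),\mu_F,\alpha_F)$ is a non-multiplicative Hom-nonassociative algebra with the stated universal property. The plan is to check the structure, build $g$ by recursion on weighted trees, check it is a morphism, and then prove uniqueness.

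\textbf{The structure.} Bilinearity of $\mu_F$ and linearity of $\alpha_F$ are immediate: both are defined on each summand $V^{\otimes n}_\tau$ by maps that are multilinear in $x_1,\ldots,x_n$, and then extended additively. No identity has to be checked, since multiplicativity is not imposed. The inclusion $i:V\to F_{\mathrm{HNAs}}(V)$, given by $V=V^{\otimes 1}_i$, is a Hom-module morphism because $\alpha_F|_V=\alpha_V$.

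\textbf{Existence of $g$.} Given $f:V\to A$ with $f\circ\alpha_V=\alpha_A\circ f$, I will define $g$ summand by summand. On $V^{\otimes n}_\tau$ set
\[
g\bigl((x_{1,n})_\tau\bigr):=(f(x_1)\cdots f(x_n))_\tau,
\]
where the right-hand side is the recursive weighted-tree evaluation in $A$ described above. This expression is multilinear in the $x_j$: it is built from $f$, $\mu_A$ and powers of $\alpha_A$, all of which are (multi)linear. So it descends to $V^{\otimes n}$, and $g$ is a well-defined linear map on the direct sum. Moreover $g\circ i=f$, since $(a)_i=a$.

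\textbf{$g$ is a morphism.} For products, write $a_j=f(x_j)$. By the unique decomposition $\tau\vee\sigma=(\tau\vee\sigma)[0]$ and the recursive definition,
\[
g\bigl((x_{1,n+m})_{\tau\vee\sigma}\bigr)=(a_1\cdots a_n)_\tau\,(a_{n+1}\cdots a_{n+m})_\sigma ,
\]
and this equals $\mu_A\bigl(g((x_{1,n})_\tau),g((x_{n+1,n+m})_\sigma)\bigr)$. For the twisting maps there are two cases:
\begin{itemize}
\item On $V$: $g\alpha_F(x)=f(\alpha_V x)=\alpha_A f(x)$.
\item For $n\ge2$: write $\tau=(\tau_1\vee\tau_2)[r]$, so that $\tau[1]=(\tau_1\vee\tau_2)[r+1]$. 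Then $g\bigl((x)_{\tau[1]}\bigr)=\alpha_A^{r+1}(\cdots)=\alpha_A\, g\bigl((x)_\tau\bigr)$.
\end{itemize}

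\textbf{Uniqueness.} This is the step that needs care, because $F_{\mathrm{HNAs}}(V)$ is not generated by $V$ under $\mu_F$ alone. It is, however, generated by $V$ under $\mu_F$ together with $\alpha_F$: every basis-type element satisfies
\[
(x_{1,n})_\tau=\alpha_F^{\,r}\Bigl(\mu_F\bigl((x_{1,p})_{\tau_1},(x_{p+1,n})_{\tau_2}\bigr)\Bigr),
\]
which follows from $\alpha_F^r\bigl((y)_{\tau_1\vee\tau_2}\bigr)=(y)_{(\tau_1\vee\tau_2)[r]}$ by iterating the definition of $\alpha_F$. So let $g'$ be any morphism with $g'\circ i=f$. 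By induction on $n$, using that $g'$ commutes with $\alpha$ and with the products, $g'$ agrees with $g$ on each $(x_{1,n})_\tau$. Linearity then gives $g'=g$.
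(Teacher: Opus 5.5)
Your proof is correct and complete. The paper gives no proof of its own here; it imports the statement from Yau's Theorem~1. Your direct verification is the standard argument behind that citation. For existence, you evaluate weighted trees recursively, $g\bigl((x_{1,n})_\tau\bigr)=(f(x_1)\cdots f(x_n))_\tau$, which is well defined because the decomposition $\tau=(\tau_1\vee\tau_2)[r]$ is unique. For uniqueness, you use the identity $(x_{1,n})_\tau=\alpha_F^{r}\bigl(\mu_F((x_{1,p})_{\tau_1},(x_{p+1,n})_{\tau_2})\bigr)$. This correctly captures that $F_{\mathrm{HNAs}}(V)$ is generated by $V$ under $\mu_F$ and $\alpha_F$ together, not under $\mu_F$ alone.
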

	
    \subsection{The free Hom-nonassociative algebra and its \texorpdfstring{$\alpha$}{alpha}-derivations}

	We now impose multiplicativity on the free algebra. Let $M_1$ be the
	two-sided ideal of $(F_{\mathrm{HNAs}}(V),\mu_F)$ generated by all elements
		$$
		\alpha_F(uv)-\alpha_F(u)\alpha_F(v),
		\qquad \forall u,v\in F_{\mathrm{HNAs}}(V).
		$$
		Define
		$$
		M_{n+1}:=\langle M_n\cup \alpha_F(M_n)\rangle,
		\qquad
		M_\infty:=\bigcup_{n\ge 1}M_n.
		$$
		Set
		$$
		\F_{\mathrm{m}}(V):=F_{\mathrm{HNAs}}(V)/M_\infty,
		\qquad
		\pi:F_{\mathrm{HNAs}}(V)\to \F_{\mathrm{m}}(V)
		$$
	where $\pi$ is the quotient map. The product and twisting map on the quotient
	are induced by $\mu_F$ and $\alpha_F$, respectively:
	$$
	\mu_{\mathrm{m}}(\pi(u),\pi(v)):=\pi(\mu_F(u,v)),
	\qquad
	\alpha_{\mathrm{m}}(\pi(u)):=\pi(\alpha_F(u)).
	$$
		
	\begin{lemma}\label{lem:multiplicativity-quotient}
		With the notation above, $(\F_{\mathrm{m}}(V),\mu_{\mathrm{m}},\alpha_{\mathrm{m}})$
		is a Hom-nonassociative algebra. 
	\end{lemma}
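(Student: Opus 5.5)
The proof will check that $M_\infty$ is a two-sided ideal of $(F_{\mathrm{HNAs}}(V),\mu_F)$ with $\alpha_F(M_\infty)\subseteq M_\infty$, i.e. a Hom-ideal in the sense of Definition~\ref{def:hom-ideal}. It then verifies that the induced operations are well defined and satisfy the multiplicativity condition~\eqref{jiehemultiplicativity}. Here $\langle S\rangle$ means the two-sided ideal of $(F_{\mathrm{HNAs}}(V),\mu_F)$ generated by $S$.

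First, the chain is increasing: $M_n\subseteq\langle M_n\cup\alpha_F(M_n)\rangle=M_{n+1}$. Each $M_n$ is a two-sided ideal, so their union $M_\infty$ is a two-sided ideal. Indeed, it is a subspace, since any two elements lie in a common $M_n$. It is closed under left and right multiplication by $F_{\mathrm{HNAs}}(V)$ because each $M_n$ is.

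Next, for stability, take $u\in M_\infty$, say $u\in M_n$. Then $\alpha_F(u)\in\alpha_F(M_n)\subseteq M_{n+1}\subseteq M_\infty$. This is exactly why the iterative construction is used: $M_1$ alone need not be $\alpha_F$-stable, since $\alpha_F$ is not multiplicative on $F_{\mathrm{HNAs}}(V)$.

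Well-definedness then follows. If $u-u'\in M_\infty$, then $\mu_F(u,v)-\mu_F(u',v)=\mu_F(u-u',v)\in M_\infty$ by the ideal property, and similarly in the second argument. Also $\alpha_F(u)-\alpha_F(u')=\alpha_F(u-u')\in M_\infty$ by stability. Hence $\mu_{\mathrm m}$ is a well-defined bilinear map and $\alpha_{\mathrm m}$ is a well-defined linear map.

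Finally, for multiplicativity, let $u,v\in F_{\mathrm{HNAs}}(V)$. Then
\[
\alpha_{\mathrm m}(\mu_{\mathrm m}(\pi(u),\pi(v)))-\mu_{\mathrm m}(\alpha_{\mathrm m}(\pi(u)),\alpha_{\mathrm m}(\pi(v)))
=\pi\bigl(\alpha_F(uv)-\alpha_F(u)\alpha_F(v)\bigr)=0,
\]
because the argument of $\pi$ is a generator of $M_1\subseteq M_\infty$. Since $\pi$ is surjective, this covers all pairs of elements of $\F_{\mathrm m}(V)$.

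The only real point needing care is the $\alpha_F$-stability of $M_\infty$, which the ascending construction resolves. Everything else is routine.
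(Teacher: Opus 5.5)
Your proposal is correct and follows essentially the same route as the paper: you show that $M_\infty$ is an $\alpha_F$-stable two-sided ideal via the increasing chain $M_n\subseteq M_{n+1}$, and then read multiplicativity off the generators of $M_1$. Your explicit check that the induced maps are well defined is a slightly more detailed version of the step the paper states in one line.
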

	
	\begin{proof}
		First, we show that $M_\infty$ is a two-sided ideal satisfying
		$\alpha_F(M_\infty)\subseteq M_\infty$.
		Since $M_1$ is a two-sided ideal and $M_{n+1}$ is the two-sided ideal generated by
		$M_n\cup\alpha_F(M_n)$, each $M_n$ is a two-sided ideal and
		$M_n\subseteq M_{n+1}$. Hence $M_\infty=\bigcup_{n\ge 1}M_n$ is a two-sided ideal. For $x\in
		M_\infty$, there exists $n\ge 1$ such that $x\in M_n$. Then
		$$
		\alpha_F(x)\in \alpha_F(M_n)\subseteq M_{n+1}\subseteq M_\infty.
		$$
		Thus $\alpha_F(M_\infty)\subseteq M_\infty$, so
		$\mu_{\mathrm{m}}$ and $\alpha_{\mathrm{m}}$ are well defined.
		
		The generators of $M_1$ give
		$\pi(\alpha_F(uv))=\pi(\alpha_F(u)\alpha_F(v))$. By the definitions of
		$\mu_{\mathrm{m}}$ and $\alpha_{\mathrm{m}}$, we obtain
		$$
		\alpha_{\mathrm{m}}(\pi(u)\pi(v))
		=
		\alpha_{\mathrm{m}}(\pi(u))\alpha_{\mathrm{m}}(\pi(v)).
		$$
		Therefore, $(\F_{\mathrm{m}}(V),\mu_{\mathrm{m}},\alpha_{\mathrm{m}})$ is a
		Hom-nonassociative algebra.
	\end{proof}

	Write $j:=\pi\circ i:V\to\F_{\mathrm{m}}(V)$ for the canonical Hom-module map.
	
	\begin{proposition}\label{prop:free-mult-HNA}
		The triple $(\F_{\mathrm{m}}(V),\mu_{\mathrm{m}},\alpha_{\mathrm{m}})$ is
		the free Hom-nonassociative algebra on $(V,\alpha_V)$. More precisely, for every Hom-nonassociative
		algebra $(A,\mu_A,\alpha_A)$ and every Hom-module morphism $f:V\to A$,
		there exists a unique morphism of Hom-nonassociative algebras
		$\overline f:\F_{\mathrm{m}}(V)\to A$ such that
		$\overline f\circ j=f$.
	\end{proposition}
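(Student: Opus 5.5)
The plan is to combine the universal property of $F_{\mathrm{HNAs}}(V)$ from Proposition~\ref{prop:FHNAs-recall} with the fact that the kernel of any morphism into a multiplicative target contains $M_\infty$.

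\textbf{Existence.} Since $(A,\mu_A,\alpha_A)$ is in particular a non-multiplicative Hom-nonassociative algebra, Proposition~\ref{prop:FHNAs-recall} gives a unique Hom-nonassociative algebra morphism $g:F_{\mathrm{HNAs}}(V)\to A$ with $g\circ i=f$. I then show by induction on $n$ that $M_n\subseteq\ker g$.
\begin{itemize}
\item[(1)] For $n=1$, take a generator. Using $g\alpha_F=\alpha_A g$, multiplicativity of $g$, and multiplicativity of $\alpha_A$, we get
\[
g\bigl(\alpha_F(uv)-\alpha_F(u)\alpha_F(v)\bigr)=\alpha_A(g(u)g(v))-\alpha_A(g(u))\alpha_A(g(v))=0 .
\]
\item[(2)] Since $g$ is multiplicative, $\ker g$ is a two-sided ideal. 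It is also $\alpha_F$-stable: if $g(x)=0$, then $g(\alpha_F(x))=\alpha_A(g(x))=0$.
\item[(3)] Hence, if $M_n\subseteq\ker g$, then $M_n\cup\alpha_F(M_n)\subseteq\ker g$. Because $\ker g$ is an ideal, it contains the ideal $M_{n+1}$ that this set generates.
\end{itemize}
Therefore $M_\infty\subseteq\ker g$, and $g$ descends to a linear map $\overline f:\F_{\mathrm m}(V)\to A$ with $\overline f\circ\pi=g$. The identities $\overline f(\pi(u)\pi(v))=g(uv)=g(u)g(v)$ and $\overline f\alpha_{\mathrm m}\pi=g\alpha_F=\alpha_A g=\alpha_A\overline f\pi$ show that $\overline f$ is a morphism. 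Finally, $\overline f\circ j=g\circ i=f$.

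\textbf{Uniqueness.} Let $h:\F_{\mathrm m}(V)\to A$ be another morphism with $h\circ j=f$. Then $h\circ\pi$ is a morphism $F_{\mathrm{HNAs}}(V)\to A$, because $\pi$ is a morphism by the construction of $\mu_{\mathrm m}$ and $\alpha_{\mathrm m}$. It also satisfies $(h\circ\pi)\circ i=f$. By the uniqueness part of Proposition~\ref{prop:FHNAs-recall}, $h\circ\pi=g=\overline f\circ\pi$. Since $\pi$ is surjective, $h=\overline f$.

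\textbf{Remaining checks.} The main point needing care is step (3): one must note that kernels of morphisms of Hom-nonassociative algebras are $\alpha$-stable ideals, so that they absorb the iterative closure defining $M_\infty$. Everything else is routine. We also need $j$ to be a Hom-module map. This holds because $\alpha_F|_V=\alpha_V$, which gives $\alpha_{\mathrm m}\circ j=j\circ\alpha_V$.
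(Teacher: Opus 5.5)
Your proof is correct and follows essentially the same route as the paper. You lift $f$ to $g$ on $F_{\mathrm{HNAs}}(V)$ via Proposition~\ref{prop:FHNAs-recall}, show by induction that $g$ kills every $M_n$, descend to $\F_{\mathrm{m}}(V)$, and get uniqueness from the surjectivity of $\pi$. Your explicit observation that $\ker g$ is an $\alpha_F$-stable two-sided ideal simply spells out the induction step the paper leaves implicit.
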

	
	\begin{proof}
		We prove the universal property using the following commutative diagram.
		\begin{figure}[H]
			\centering
			\small
			\begin{tikzpicture}[
				>=stealth,
				every node/.style={inner sep=2pt}
				]
				\node (V) at (0,3) {$(V,\alpha_V)$};
				\node (FHNAs) at (0,1.5) {$(F_{\mathrm{HNAs}}(V),\mu_F,\alpha_F)$};
				\node (Fm) at (0,0) {$(\F_{\mathrm{m}}(V),\mu_{\mathrm{m}},\alpha_{\mathrm{m}})$};
				\node (A) at (7.4,1.5) {$(A,\mu_A,\alpha_A)$};
				
				\draw[->] (V) -- node[left] {$i$} (FHNAs);
				\draw[->] (FHNAs) -- node[left] {$\pi$} (Fm);
				\draw[->] (V.east) -- node[above] {$f$} (A.north west);
				\draw[->] (FHNAs.east) -- node[above] {$g$} (A.west);
				\node at (3.7,1.26) {\scriptsize Proposition~\ref{prop:FHNAs-recall}};
				\draw[->,dashed] (Fm.east) -- node[midway,below,yshift=-5pt] {$\exists!\,\overline f$} (A.south west);
			\end{tikzpicture}
			\caption{Universal property of the free Hom-nonassociative algebra.}
		\end{figure}
		
		By Proposition~\ref{prop:FHNAs-recall}, there exists a unique morphism of
		non-multiplicative Hom-nonassociative algebras $g:F_{\mathrm{HNAs}}(V)\to A$ such that $g\circ
		i=f$. Since $\alpha_A$ is multiplicative, for
		$u,v\in F_{\mathrm{HNAs}}(V)$ we have
		$$
		g(\alpha_F(uv)-\alpha_F(u)\alpha_F(v))
		=
		\alpha_A(g(u)g(v))-\alpha_A(g(u))\alpha_A(g(v))
		=0.
		$$
		Hence $g(M_1)=0$. Using $g \circ \alpha_F = \alpha_A \circ g$, we obtain by induction $g(M_n)=0$
		for all $n\ge 1$. Thus $g(M_\infty)=0$, and $g$ induces a unique linear map
		$\overline f:\F_{\mathrm{m}}(V)\to A$ such that $\overline f(\pi(u))=g(u)$ for $u\in
		F_{\mathrm{HNAs}}(V)$. Equivalently, $g=\overline f\circ\pi$. The induced map $\overline f$
		preserves products and twisting maps and satisfies $\overline f\circ j=f$.
		Its uniqueness follows from the surjectivity of $\pi$ and the uniqueness
		of $g$.
	\end{proof}

	For the remainder of this section, let
	$(\g,[\cdot,\cdot],\alpha,d)$ be an object of
	$\mathbf{HomLieDer}_\alpha$. We use the free Hom-nonassociative algebra
	$(\F_{\mathrm{m}}(\g),\mu_{\mathrm{m}},\alpha_{\mathrm{m}})$ on the Hom-module $(\g,\alpha)$ and
	write $j:\g\to \F_{\mathrm{m}}(\g)$ for its canonical Hom-module map. The next proposition extends
	$d$ to an
	$\alpha_{\mathrm m}$-derivation on this free algebra.
	
	\begin{proposition}\label{lem:extension-derivation-free}
		Let $(\g,[\cdot,\cdot],\alpha,d)$ be an object of
		$\mathbf{HomLieDer}_{\alpha}$. Let
		$(\F_{\mathrm{m}}(\g),\mu_{\mathrm{m}},\alpha_{\mathrm{m}})$ be the
		free Hom-nonassociative algebra of Proposition~\ref{prop:free-mult-HNA},
		with canonical Hom-module morphism
		$j:\g\to\F_{\mathrm{m}}(\g)$. Then there exists a unique linear map
		$\widehat d:\F_{\mathrm{m}}(\g)\to \F_{\mathrm{m}}(\g)$ such that
		\begin{align}
			\widehat d\circ j &= j\circ d, \label{eq:dhat-on-generators}\\
			\widehat d\circ \alpha_{\mathrm{m}} &= \alpha_{\mathrm{m}}\circ \widehat d, \label{eq:dhat-alpha}\\
			\widehat d(xy) &= \widehat d(x)\alpha_{\mathrm{m}}(y)+\alpha_{\mathrm{m}}(x)\widehat d(y),
			\qquad x,y\in \F_{\mathrm{m}}(\g). \label{eq:dhat-leibniz}
		\end{align}
	\end{proposition}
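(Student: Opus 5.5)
The plan is to use the direct-sum construction of Proposition~\ref{prop:semidirect-HNA} together with the universal property of Proposition~\ref{prop:free-mult-HNA}. Write $A:=\F_{\mathrm m}(\g)$ with product $\mu_{\mathrm m}$ and twisting map $\alpha_{\mathrm m}$. By Proposition~\ref{prop:semidirect-HNA}, $(A\oplus A,\star,\widetilde{\alpha})$ is a (multiplicative) Hom-nonassociative algebra, where
\[
(a,u)\star(b,v)=\bigl(ab,\,u\alpha_{\mathrm m}(b)+\alpha_{\mathrm m}(a)v\bigr),
\qquad \widetilde{\alpha}=\alpha_{\mathrm m}\oplus\alpha_{\mathrm m}.
\]
Define $f:\g\to A\oplus A$ by $f(x)=(j(x),j(d(x)))$. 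Since $j\circ\alpha=\alpha_{\mathrm m}\circ j$ and $d\alpha=\alpha d$, we have $f\circ\alpha=\widetilde{\alpha}\circ f$, so $f$ is a Hom-module morphism. Proposition~\ref{prop:free-mult-HNA} then gives a unique Hom-nonassociative algebra morphism $\overline f:A\to A\oplus A$ with $\overline f\circ j=f$.

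Write $\overline f=(p_1,p_2)$. The first projection $A\oplus A\to A$ is a morphism of Hom-nonassociative algebras. Hence $p_1$ is an endomorphism of $A$ with $p_1\circ j=j$, and the uniqueness part of Proposition~\ref{prop:free-mult-HNA} (applied to $f=j$) forces $p_1=\id$. Set $\widehat d:=p_2$. Condition \eqref{eq:dhat-on-generators} is then immediate from $\overline f\circ j=f$. Reading off the second component of $\overline f\circ\alpha_{\mathrm m}=\widetilde{\alpha}\circ\overline f$ gives \eqref{eq:dhat-alpha}. Reading off the second component of $\overline f(xy)=\overline f(x)\star\overline f(y)=(x,\widehat d(x))\star(y,\widehat d(y))$ gives \eqref{eq:dhat-leibniz}.

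For uniqueness, suppose $D'$ is any linear map satisfying \eqref{eq:dhat-on-generators}--\eqref{eq:dhat-leibniz}. Then $x\mapsto(x,D'(x))$ is a morphism of Hom-nonassociative algebras $A\to A\oplus A$: it preserves $\star$ by \eqref{eq:dhat-leibniz} and intertwines the twisting maps by \eqref{eq:dhat-alpha}. It also agrees with $f$ on $j(\g)$ by \eqref{eq:dhat-on-generators}. The uniqueness in Proposition~\ref{prop:free-mult-HNA} therefore gives $(\id,D')=\overline f$, and so $D'=\widehat d$.

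The only point needing care is the use of uniqueness for $p_1$, i.e.\ that the identity is the unique endomorphism of $\F_{\mathrm m}(\g)$ fixing $j(\g)$. This is a direct instance of Proposition~\ref{prop:free-mult-HNA}, so no computation with weighted trees or with the ideal $M_\infty$ is required. Note also that the Leibniz rule for $d$ with respect to the bracket of $\g$ plays no role here: only $d\alpha=\alpha d$ is used, since $\F_{\mathrm m}(\g)$ is free on the Hom-module $(\g,\alpha)$.
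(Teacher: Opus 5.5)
Your proposal is correct and follows essentially the same route as the paper's proof. You build the direct-sum algebra $\F_{\mathrm m}(\g)\oplus\F_{\mathrm m}(\g)$ and lift $x\mapsto(j(x),j(d(x)))$ via freeness. You then force the first component to be the identity by uniqueness, read off $\widehat d$ and its properties from the second component, and get uniqueness from the morphism $u\mapsto(u,D'(u))$.
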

	
	\begin{proof}
		We construct the extension using the universal property and the algebra
		structure on the direct sum in Proposition~\ref{prop:semidirect-HNA}. The maps involved
		are shown in the following commutative diagram.
		\begin{figure}[H]
			\centering
			\begin{tikzpicture}[>=stealth,line width=0.45pt]
				\node (A) at (0,3) {$(\g,\alpha)$};
				\node (B) at (7.4,3) {$(\F_{\mathrm{m}}(\g),\mu_{\mathrm{m}},\alpha_{\mathrm{m}})$};
				\node (C) at (0,0) {$(\F_{\mathrm{m}}(\g),\mu_{\mathrm{m}},\alpha_{\mathrm{m}})$};
				\node (D) at (5.0,1.25) {$(\F_{\mathrm{m}}(\g)\oplus\F_{\mathrm{m}}(\g),\star,\widetilde{\alpha}_{\mathrm{m}})$};
				
				\draw[->] (A) -- node[above] {$j$} (B);
				\draw[->] (A) -- node[left] {$j$} (C);
				\draw[->] (A) -- node[above right] {$\eta$} (D);
				\draw[->,dashed] (C) -- node[below right] {$\Phi$} (D);
				\draw[->] (D) -- node[pos=0.55,above,sloped,yshift=2pt] {$p_1$} (B);
				\draw[->]
				([xshift=6pt]C.east)
				.. controls (3.4,-0.55) and (9.50,0.01) ..
				node[pos=0.82,right,xshift=4pt] {$\id_{\F_{\mathrm{m}}(\g)}$}
				([xshift=-6pt]B.south);
			\end{tikzpicture}
			\caption{Extension of the derivation to the free Hom-nonassociative algebra.}
			\label{fig:extension-derivation}
		\end{figure}
		
		Consider the Hom-nonassociative algebra
		$(\F_{\mathrm{m}}(\g)\oplus \F_{\mathrm{m}}(\g),\star,
		\widetilde{\alpha}_{\mathrm{m}})$ of
		Proposition~\ref{prop:semidirect-HNA}. For
		$(u,r),(v,s)\in \F_{\mathrm{m}}(\g)\oplus \F_{\mathrm{m}}(\g)$, its
		product and twisting map are
		\begin{align*}
		(u,r)\star(v,s)&:=(uv,r\alpha_{\mathrm{m}}(v)+\alpha_{\mathrm{m}}(u)s),\\
		\widetilde{\alpha}_{\mathrm{m}}(u,r)&:=(\alpha_{\mathrm{m}}(u),\alpha_{\mathrm{m}}(r)).
		\end{align*}
		Let $\eta:\g\to \F_{\mathrm{m}}(\g)\oplus \F_{\mathrm{m}}(\g)$ be the linear map given by
		$\eta(x):=(j(x),j(d(x)))$ for all $x\in\g$. Since $d\alpha=\alpha d$ and $j$ is a Hom-module
		morphism,
		$$
		\widetilde{\alpha}_{\mathrm{m}}(\eta(x))=(j(\alpha(x)),j(d(\alpha(x))))=\eta(\alpha(x)).
		$$
		Hence $\eta$ is a Hom-module morphism. By
		Proposition~\ref{prop:free-mult-HNA}, there exists a unique morphism of
		Hom-nonassociative algebras
		$\Phi:\F_{\mathrm{m}}(\g)\to
		\F_{\mathrm{m}}(\g)\oplus \F_{\mathrm{m}}(\g)$
		such that $\Phi\circ j=\eta$.
		
		Let $p_1:\F_{\mathrm{m}}(\g)\oplus \F_{\mathrm{m}}(\g)\longrightarrow \F_{\mathrm{m}}(\g)$ be the
		first projection, given by $p_1(u,v):=u$ for all $u,v\in \F_{\mathrm{m}}(\g)$. Then $p_1$ is a
		morphism of Hom-nonassociative algebras, and $p_1\Phi$
		satisfies $(p_1\Phi)j=j$. By the uniqueness in
		Proposition~\ref{prop:free-mult-HNA},
		$p_1\Phi=\id_{\F_{\mathrm{m}}(\g)}$. Hence, for every
		$u\in \F_{\mathrm{m}}(\g)$, there is a unique element
		$\widehat d(u)\in \F_{\mathrm{m}}(\g)$ such that
		$\Phi(u)=(u,\widehat d(u))$.
		The linearity of $\Phi$ gives a linear map
		$\widehat d:\F_{\mathrm{m}}(\g)\to \F_{\mathrm{m}}(\g)$.
		
		Taking second components in $\Phi(j(x))=\eta(x)$, we obtain
		$\widehat d(j(x))=j(d(x))$, which proves
		\eqref{eq:dhat-on-generators}. The compatibility of $\Phi$ with the
		twisting maps gives
		$$
		\Phi(\alpha_{\mathrm{m}}(u))=\widetilde{\alpha}_{\mathrm{m}}(\Phi(u))=(\alpha_{\mathrm{m}}(u),\alpha_{\mathrm{m}}(\widehat d(u))),
		$$
		whose second components give \eqref{eq:dhat-alpha}. Similarly, since
		$\Phi$ preserves products,
		$$
		\Phi(uv)=\Phi(u)\star\Phi(v)=(uv,\widehat d(u)\alpha_{\mathrm{m}}(v)+\alpha_{\mathrm{m}}(u)\widehat d(v)),
		$$
		which gives \eqref{eq:dhat-leibniz} by comparing second components.
		
		To prove uniqueness, let
		$\delta:\F_{\mathrm{m}}(\g)\to\F_{\mathrm{m}}(\g)$ be a linear map satisfying
		\eqref{eq:dhat-on-generators}--\eqref{eq:dhat-leibniz}. Then
		$u\mapsto (u,\delta(u))$ is a morphism of Hom-nonassociative algebras
		$\F_{\mathrm{m}}(\g)\to
		\F_{\mathrm{m}}(\g)\oplus \F_{\mathrm{m}}(\g)$ extending $\eta$. By
		the uniqueness of $\Phi$, this morphism equals $\Phi$. Therefore,
		$\delta=\widehat d$.
	\end{proof}
	
	\begin{remark}
		By Proposition~\ref{lem:extension-derivation-free},
		$(\F_{\mathrm{m}}(\g),\mu_{\mathrm{m}},\alpha_{\mathrm{m}},\widehat d)$
		is a Hom-nonassociative algebra with an $\alpha_{\mathrm m}$-derivation in the sense of
		Definition~\ref{daidaozidehomnonassociativedaishu}.
	\end{remark}
	
\subsection{From free Hom-nonassociative algebras to Hom-associative algebras}
	
	\begin{lemma}\label{lem:ideal-closure}
		Let $J\subseteq \F_{\mathrm{m}}(\g)$, and let $K$ be a two-sided ideal of
		$(\F_{\mathrm{m}}(\g),\mu_{\mathrm{m}})$. Suppose that
		$\widehat d(J)\subseteq K$ and $\alpha_{\mathrm{m}}(J)\subseteq K$. Then
		$\widehat d(\langle J\rangle)\subseteq K$ and
		$\alpha_{\mathrm{m}}(\langle J\rangle)\subseteq K$.
	\end{lemma}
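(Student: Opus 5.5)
The plan is to describe the two-sided ideal generated by $J$ explicitly, and then use the Leibniz rule \eqref{eq:dhat-leibniz} and the multiplicativity of $\alpha_{\mathrm m}$ to push both maps inside products. Since $\F_{\mathrm m}(\g)$ is nonassociative, I will describe $\langle J\rangle$ in terms of iterated multiplication operators, not as sums of $aJb$. For $z\in\F_{\mathrm m}(\g)$ write $L_z(x)=zx$ and $R_z(x)=xz$. Then $\langle J\rangle$ is the linear span of all elements
\[
T_1T_2\cdots T_k(y),\qquad y\in J,\ k\ge 0,
\]
where each $T_i$ is some $L_z$ or $R_z$. This span contains $J$, is closed under left and right multiplication, and is contained in every two-sided ideal containing $J$. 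Since $\widehat d$ and $\alpha_{\mathrm m}$ are linear, it therefore suffices to prove by induction on $k$ that $\widehat d(T_1\cdots T_k(y))\in K$ and $\alpha_{\mathrm m}(T_1\cdots T_k(y))\in K$.

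The case $k=0$ is exactly the hypothesis. For the induction step, write $w=T_2\cdots T_k(y)$, so $w$ is an element of $\langle J\rangle$ of shorter length. The element under consideration is then $zw$ or $wz$; treat $zw$, the other case being symmetric. By multiplicativity (Lemma~\ref{lem:multiplicativity-quotient}) we have $\alpha_{\mathrm m}(zw)=\alpha_{\mathrm m}(z)\alpha_{\mathrm m}(w)$, and $\alpha_{\mathrm m}(w)\in K$ by induction. Since $K$ is an ideal, this product lies in $K$.

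For $\widehat d$, \eqref{eq:dhat-leibniz} gives
\[
\widehat d(zw)=\widehat d(z)\alpha_{\mathrm m}(w)+\alpha_{\mathrm m}(z)\widehat d(w).
\]
Both terms lie in $K$, because $\alpha_{\mathrm m}(w)\in K$ and $\widehat d(w)\in K$ by the inductive hypothesis and $K$ is a two-sided ideal. This completes the induction.

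The main obstacle is making the inductive hypothesis strong enough. A naive induction would only know $w\in\langle J\rangle$, not that $\alpha_{\mathrm m}(w)$ and $\widehat d(w)$ lie in $K$. This is why both statements must be proved simultaneously, with induction on the length $k$ of the operator word rather than on membership in $\langle J\rangle$. Note also that $K$ need not be stable under $\alpha_{\mathrm m}$ or $\widehat d$. The argument never applies these maps to elements of $K$, only to elements of $\langle J\rangle$ built up from $J$, so this causes no problem.
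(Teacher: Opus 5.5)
Your proof is correct and follows the paper's argument. Both describe $\langle J\rangle$ as a span of products with a distinguished factor from $J$, and both prove the $\widehat d$ and $\alpha_{\mathrm m}$ statements simultaneously by induction, using the Leibniz rule \eqref{eq:dhat-leibniz}, multiplicativity of $\alpha_{\mathrm m}$, and the ideal property of $K$; your operator words $T_1\cdots T_k(y)$ are simply a more explicit bookkeeping of the paper's induction on parenthesized products.
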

	
	\begin{proof}
		The ideal $\langle J\rangle$ is spanned by parenthesized products
		containing at least one factor from $J$. We proceed by induction on these
		products. If $m\in J$, then
		$\widehat d(m)\in K$ and $\alpha_{\mathrm{m}}(m)\in K$ by assumption.
		
		Suppose that $m=uv$ and that $u$ contains a factor from $J$. By the
		induction hypothesis, $\widehat d(u),\alpha_{\mathrm{m}}(u)\in K$. Since
		$K$ is a two-sided ideal,
		$\widehat d(m)=\widehat d(u)\alpha_{\mathrm{m}}(v)+
		\alpha_{\mathrm{m}}(u)\widehat d(v)\in K$ and
		$\alpha_{\mathrm{m}}(m)=
		\alpha_{\mathrm{m}}(u)\alpha_{\mathrm{m}}(v)\in K$. The case in which
		$v$ contains a factor from $J$ is analogous.
		
		Thus $\widehat d(m),\alpha_{\mathrm{m}}(m)\in K$ for every
		parenthesized product in this spanning set. By linearity,
		$\widehat d(\langle J\rangle)\subseteq K$ and
		$\alpha_{\mathrm{m}}(\langle J\rangle)\subseteq K$.
	\end{proof}
	
	We now impose the Hom-associativity and bracket relations. Let
	$S_{\mathrm{as}}$ be the set of Hom-associativity elements
	$$
	\mathrm{As}(u,v,w):=\alpha_{\mathrm{m}}(u)(vw)-(uv)\alpha_{\mathrm{m}}(w),
	\qquad u,v,w\in \F_{\mathrm{m}}(\g),
	$$
	and let $S_{\mathrm{br}}$ be the set of bracket-commutator elements
	$$
	R(x,y):=j([x,y])-(j(x)j(y)-j(y)j(x)),
	\qquad x,y\in \g.
	$$
	Let $I:=\langle S_{\mathrm{as}}\cup S_{\mathrm{br}}\rangle$ be the
	two-sided ideal of $\F_{\mathrm{m}}(\g)$ generated by these sets. To
	define the twisting map and derivation on the quotient, we first prove
	that $I$ is stable under $\alpha_{\mathrm{m}}$ and $\widehat d$.
	
	\begin{proposition}\label{lem:defining-ideal-stable}
		The ideal $I$ satisfies $\alpha_{\mathrm{m}}(I)\subseteq I$ and $\widehat d(I)\subseteq I.$
	\end{proposition}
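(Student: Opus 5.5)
By Lemma~\ref{lem:ideal-closure}, applied with $J:=S_{\mathrm{as}}\cup S_{\mathrm{br}}$ and $K:=I$, it suffices to check that $\alpha_{\mathrm m}$ and $\widehat d$ send every generator $\mathrm{As}(u,v,w)$ and every generator $R(x,y)$ into $I$. The check uses multiplicativity of $\alpha_{\mathrm m}$ (Lemma~\ref{lem:multiplicativity-quotient}), the relations \eqref{eq:dhat-on-generators}--\eqref{eq:dhat-leibniz}, and the axioms for $(\g,[\cdot,\cdot],\alpha,d)$.

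For the twisting map, the computation is immediate. Multiplicativity gives
\[
\alpha_{\mathrm m}(\mathrm{As}(u,v,w))=\mathrm{As}(\alpha_{\mathrm m}(u),\alpha_{\mathrm m}(v),\alpha_{\mathrm m}(w))\in S_{\mathrm{as}}.
\]
Since $j$ is a Hom-module morphism and $\alpha$ is multiplicative on $\g$, we also get $\alpha_{\mathrm m}(R(x,y))=R(\alpha(x),\alpha(y))\in S_{\mathrm{br}}$.

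For the bracket relations under $\widehat d$, I expand using \eqref{eq:dhat-on-generators} and \eqref{eq:dhat-leibniz}, together with $\alpha_{\mathrm m}\circ j=j\circ\alpha$:
\[
\widehat d(R(x,y))=j(d[x,y])-\bigl(j(dx)j(\alpha y)+j(\alpha x)j(dy)-j(dy)j(\alpha x)-j(\alpha y)j(dx)\bigr).
\]
The $\alpha$-derivation property of $d$ gives $j(d[x,y])=j([dx,\alpha y])+j([\alpha x,dy])$. Regrouping then yields
\[
\widehat d(R(x,y))=R(dx,\alpha y)+R(\alpha x,dy)\in I.
\]

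The main step is the Hom-associator. I expand each term with the Leibniz rule \eqref{eq:dhat-leibniz}, and then use \eqref{eq:dhat-alpha} and multiplicativity of $\alpha_{\mathrm m}$ to rewrite $\widehat d\alpha_{\mathrm m}=\alpha_{\mathrm m}\widehat d$ and $\alpha_{\mathrm m}(vw)=\alpha_{\mathrm m}(v)\alpha_{\mathrm m}(w)$. The first term expands as
\[
\widehat d(\alpha_{\mathrm m}(u)(vw))=\alpha_{\mathrm m}(\widehat d u)\,\alpha_{\mathrm m}(vw)+\alpha_{\mathrm m}^2(u)\bigl(\widehat d(v)\alpha_{\mathrm m}(w)+\alpha_{\mathrm m}(v)\widehat d(w)\bigr).
\]
The second term is handled symmetrically. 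Collecting terms, I expect
\[
\widehat d(\mathrm{As}(u,v,w))=\mathrm{As}(\widehat d u,\alpha_{\mathrm m}v,\alpha_{\mathrm m}w)+\mathrm{As}(\alpha_{\mathrm m}u,\widehat d v,\alpha_{\mathrm m}w)+\mathrm{As}(\alpha_{\mathrm m}u,\alpha_{\mathrm m}v,\widehat d w).
\]
The identity should hold exactly in $\F_{\mathrm m}(\g)$, with no need to pass to the quotient. The only real work is the bookkeeping of the six terms, making sure every $\alpha_{\mathrm m}$ lands where the right-hand associators require. With all generators handled, Lemma~\ref{lem:ideal-closure} gives $\alpha_{\mathrm m}(I)\subseteq I$ and $\widehat d(I)\subseteq I$.
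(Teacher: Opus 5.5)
Your proposal is correct and follows the paper's proof essentially verbatim. You check that $\alpha_{\mathrm m}$ and $\widehat d$ send each generator $\mathrm{As}(u,v,w)$ and $R(x,y)$ into $I$, using the same identities as the paper: three associators for $\widehat d(\mathrm{As}(u,v,w))$, and $R(dx,\alpha y)+R(\alpha x,dy)$ for $\widehat d(R(x,y))$. You then apply Lemma~\ref{lem:ideal-closure} with $J=S_{\mathrm{as}}\cup S_{\mathrm{br}}$ and $K=I$. The associator identity you only ``expect'' does hold exactly in $\F_{\mathrm m}(\g)$, since expanding both sides gives the same six terms.
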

	
	\begin{proof}
		Let $S:=S_{\mathrm{as}}\cup S_{\mathrm{br}}$, so that $I=\langle S\rangle$.
		We first show that
		$$
		\widehat d(S)\subseteq I,
		\qquad
		\alpha_{\mathrm{m}}(S)\subseteq I.
		$$
		For the Hom-associativity generators,
		\eqref{eq:dhat-alpha}, \eqref{eq:dhat-leibniz}, and multiplicativity of
		$\alpha_{\mathrm{m}}$ give
		\begin{align*}
			\widehat d(\mathrm{As}(u,v,w))
			&=\mathrm{As}(\widehat d(u),\alpha_{\mathrm{m}}(v),\alpha_{\mathrm{m}}(w))\\
			&\quad+\mathrm{As}(\alpha_{\mathrm{m}}(u),\widehat d(v),\alpha_{\mathrm{m}}(w))\\
			&\quad+\mathrm{As}(\alpha_{\mathrm{m}}(u),\alpha_{\mathrm{m}}(v),\widehat d(w)).
		\end{align*}
		Thus $\widehat d(S_{\mathrm{as}})\subseteq I$. Also,
		$$
		\alpha_{\mathrm{m}}(\mathrm{As}(u,v,w))=\mathrm{As}(\alpha_{\mathrm{m}}(u),\alpha_{\mathrm{m}}(v),\alpha_{\mathrm{m}}(w)),
		$$
		so $\alpha_{\mathrm{m}}(S_{\mathrm{as}})\subseteq I$.
		
		For the bracket relations, the $\alpha$-derivation identity for
		$d$ gives
		\begin{align*}
			\widehat d(R(x,y))
			&=j(d([x,y]))-\widehat d(j(x)j(y)-j(y)j(x))\\
			&=R(d(x),\alpha(y))+R(\alpha(x),d(y)).
		\end{align*}
		Thus $\widehat d(S_{\mathrm{br}})\subseteq I$. Since the twisting maps on
		$\g$ and $\F_{\mathrm{m}}(\g)$ are multiplicative and $j$ is a
		Hom-module morphism,
		$$
		\alpha_{\mathrm{m}}(R(x,y))=R(\alpha(x),\alpha(y)),
		$$
		so $\alpha_{\mathrm{m}}(S_{\mathrm{br}})\subseteq I$.
		
		Applying Lemma~\ref{lem:ideal-closure} with $J=S$ and $K=I$ gives
		$$
		\widehat d(I)\subseteq I,
		\qquad
		\alpha_{\mathrm{m}}(I)\subseteq I.
		$$
	\end{proof}
	
	Let $\U_{\mathrm{m}}(\g):=\F_{\mathrm{m}}(\g)/I$, and let
	$\pi_I:\F_{\mathrm{m}}(\g)\longrightarrow\U_{\mathrm{m}}(\g)$ be the
	quotient map. We denote by $\mu_U$, $\alpha_U$, and $D_U$ the maps induced by
	$\mu_{\mathrm{m}}$, $\alpha_{\mathrm{m}}$, and $\widehat d$,
	respectively. Thus, for $u,v\in \F_{\mathrm{m}}(\g)$,
	\begin{align*}
	\mu_U(\pi_I(u),\pi_I(v))&:=\pi_I(\mu_{\mathrm{m}}(u,v)),\\
	\alpha_U(\pi_I(u))&:=\pi_I(\alpha_{\mathrm{m}}(u)),\\
	D_U(\pi_I(u))&:=\pi_I(\widehat d(u)).
	\end{align*}
	
	\begin{proposition}\label{prop:Um-HomAsDer}
		The quadruple $(\U_{\mathrm{m}}(\g),\mu_U,\alpha_U,D_U)$ is an object of
		$\mathbf{HomAsDer}_\alpha$.
	\end{proposition}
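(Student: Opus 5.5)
The plan is to verify each axiom of $\mathbf{HomAsDer}_\alpha$ on the quotient by lifting to $\F_{\mathrm m}(\g)$, using the already-established properties there, and pushing down along the surjection $\pi_I$.

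The first step is well-definedness of $\mu_U$, $\alpha_U$, and $D_U$. The product is well defined because $I$ is a two-sided ideal of $(\F_{\mathrm m}(\g),\mu_{\mathrm m})$. The maps $\alpha_U$ and $D_U$ are well defined because $\alpha_{\mathrm m}(I)\subseteq I$ and $\widehat d(I)\subseteq I$ by Proposition~\ref{lem:defining-ideal-stable}. Linearity and bilinearity are inherited from $\alpha_{\mathrm m}$, $\widehat d$, $\mu_{\mathrm m}$ and the linearity of $\pi_I$.

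Next I would check the identities. Since $\pi_I$ is surjective, every element of $\U_{\mathrm m}(\g)$ has the form $\pi_I(u)$. Each identity then follows by applying $\pi_I$ to the corresponding statement in $\F_{\mathrm m}(\g)$.

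\begin{itemize}
\item \emph{Multiplicativity.} Lemma~\ref{lem:multiplicativity-quotient} gives $\alpha_{\mathrm m}(uv)=\alpha_{\mathrm m}(u)\alpha_{\mathrm m}(v)$, so $\alpha_U(\pi_I(u)\pi_I(v))=\alpha_U(\pi_I(u))\alpha_U(\pi_I(v))$.
\item \emph{Hom-associativity.} We have $\pi_I(\mathrm{As}(u,v,w))=0$ because $\mathrm{As}(u,v,w)\in S_{\mathrm{as}}\subseteq I$. This yields $\alpha_U(a)(bc)=(ab)\alpha_U(c)$ for all $a,b,c$.
\item \emph{Commutation $D_U\alpha_U=\alpha_U D_U$.} This comes from \eqref{eq:dhat-alpha}.
\item \emph{Twisted Leibniz rule.} The rule $D_U(ab)=D_U(a)\alpha_U(b)+\alpha_U(a)D_U(b)$ comes from \eqref{eq:dhat-leibniz}.
\end{itemize}

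There is no real obstacle, since the substantive work, namely the stability of $I$, was done in Proposition~\ref{lem:defining-ideal-stable}. The only point needing a little care is that Hom-associativity must hold for arbitrary elements of the quotient, not just images of generators. This is handled because $S_{\mathrm{as}}$ was defined with $u,v,w$ ranging over all of $\F_{\mathrm m}(\g)$. It then suffices to observe that every element of $\U_{\mathrm m}(\g)$ lifts.
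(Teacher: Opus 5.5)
Your proposal is correct and follows essentially the same route as the paper. Well-definedness comes from $I$ being a two-sided ideal together with Proposition~\ref{lem:defining-ideal-stable}. Multiplicativity and Hom-associativity (via $\mathrm{As}(u,v,w)\in I$ for arbitrary $u,v,w$) then pass to the quotient, and the derivation identities follow by lifting along the surjection $\pi_I$ and applying \eqref{eq:dhat-alpha} and \eqref{eq:dhat-leibniz}.
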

	
	\begin{proof}
		Since $I$ is a two-sided ideal, $\mu_U$ is well defined.
		Proposition~\ref{lem:defining-ideal-stable} gives
		$\alpha_{\mathrm{m}}(I)\subseteq I$ and $\widehat d(I)\subseteq I$,
		so $\alpha_U$ and $D_U$ are also well defined.
		
		The quotient is multiplicative because $\alpha_{\mathrm{m}}$ is
		multiplicative, and it is Hom-associative because
		$\mathrm{As}(u,v,w)\in I$.
		
		To verify the derivation identities, let $a,b\in\U_{\mathrm{m}}(\g)$.
		Since $\pi_I$ is surjective, choose $u,v\in\F_{\mathrm{m}}(\g)$ such that
		$a=\pi_I(u)$ and $b=\pi_I(v)$. By \eqref{eq:dhat-alpha} and the
		definitions of the induced maps, we have
		\begin{align*}
			D_U(\alpha_U(a))
			&=\pi_I\bigl(\widehat d(\alpha_{\mathrm{m}}(u))\bigr)\\
			&=\pi_I\bigl(\alpha_{\mathrm{m}}(\widehat d(u))\bigr)\\
			&=\alpha_U(D_U(a)).
		\end{align*}
		Similarly, \eqref{eq:dhat-leibniz} gives
		\begin{align*}
			D_U(ab)
			&=\pi_I\bigl(\widehat d(uv)\bigr)\\
			&=\pi_I\bigl(\widehat d(u)\alpha_{\mathrm{m}}(v)
			+\alpha_{\mathrm{m}}(u)\widehat d(v)\bigr)\\
			&=D_U(a)\alpha_U(b)+\alpha_U(a)D_U(b).
		\end{align*}
		These equalities hold for all $a,b\in\U_{\mathrm{m}}(\g)$. Thus
		$D_U$ is an $\alpha_U$-derivation, and
		$(\U_{\mathrm{m}}(\g),\mu_U,\alpha_U,D_U)$ is an object of
		$\mathbf{HomAsDer}_\alpha$.
	\end{proof}
	
	Write $\iota_U:=\pi_I\circ j:\g\to\U_{\mathrm{m}}(\g)$ for the canonical map.
	
	\begin{proposition}\label{prop:iota-der}
		Let $(\g,[\cdot,\cdot],\alpha,d)$ be an object of
		$\mathbf{HomLieDer}_\alpha$. Then the canonical map
		$$
		\iota_U: (\g,[\cdot,\cdot],\alpha,d) \to HLie_{\Der_\alpha}(\U_{\mathrm{m}}(\g),\mu_U,\alpha_U,D_U)
		$$
		is a morphism in $\mathbf{HomLieDer}_\alpha$.
	\end{proposition}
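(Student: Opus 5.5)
To prove Proposition~\ref{prop:iota-der}, I will check directly the three defining conditions of a morphism in $\mathbf{HomLieDer}_\alpha$. The target object is $HLie_{\Der_\alpha}(\U_{\mathrm m}(\g),\mu_U,\alpha_U,D_U)$. By Proposition~\ref{prop:Um-HomAsDer} and Proposition~\ref{prop:commutator}, this is the Hom-Lie algebra $(\U_{\mathrm m}(\g),[\cdot,\cdot]_U,\alpha_U,D_U)$, where $[a,b]_U=\mu_U(a,b)-\mu_U(b,a)$. So I need to verify that for all $x,y\in\g$:
\[
\iota_U([x,y])=[\iota_U(x),\iota_U(y)]_U,\qquad
\iota_U\circ\alpha=\alpha_U\circ\iota_U,\qquad
\iota_U\circ d=D_U\circ\iota_U .
\]

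\emph{Bracket.} Since $\pi_I$ is multiplicative by the definition of $\mu_U$, we have
\[
[\iota_U(x),\iota_U(y)]_U=\pi_I\bigl(j(x)j(y)-j(y)j(x)\bigr).
\]
Then
\[
\iota_U([x,y])-[\iota_U(x),\iota_U(y)]_U=\pi_I(R(x,y)).
\]
This vanishes because $R(x,y)\in S_{\mathrm{br}}\subseteq I$.

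\emph{Twisting maps.} The map $j=\pi\circ i$ is a Hom-module morphism, since $\alpha_F|_{\g}=\alpha$ and $\pi\circ\alpha_F=\alpha_{\mathrm m}\circ\pi$. So $j\circ\alpha=\alpha_{\mathrm m}\circ j$. By the definition of $\alpha_U$, we also have $\pi_I\circ\alpha_{\mathrm m}=\alpha_U\circ\pi_I$. Combining these gives
\[
\iota_U\circ\alpha=\pi_I\circ\alpha_{\mathrm m}\circ j=\alpha_U\circ\iota_U .
\]

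\emph{Derivations.} By \eqref{eq:dhat-on-generators}, $\widehat d\circ j=j\circ d$. By the definition of $D_U$, $\pi_I\circ\widehat d=D_U\circ\pi_I$. Hence
\[
\iota_U\circ d=\pi_I\circ\widehat d\circ j=D_U\circ\iota_U .
\]

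There is no real obstacle here. The only point needing care is well-definedness of the induced maps, namely that $\pi_I$ intertwines $\alpha_{\mathrm m}$ with $\alpha_U$ and $\widehat d$ with $D_U$. This is supplied by Proposition~\ref{lem:defining-ideal-stable}, which shows $I$ is stable under both maps.
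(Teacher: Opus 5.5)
Your proposal is correct and follows the same route as the paper: the bracket condition comes from $R(x,y)\in I$, and the twisting-map and derivation conditions come from $j\circ\alpha=\alpha_{\mathrm m}\circ j$ and $\widehat d\circ j=j\circ d$ pushed through $\pi_I$. You spell out the twisting-map step a little more explicitly than the paper, which only says it follows from the quotient construction.
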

	
	\begin{proof}
		The equality $R(x,y)=j([x,y])-(j(x)j(y)-j(y)j(x))\in I$ shows that $\iota_U$ preserves the
		bracket. The quotient construction
		also gives compatibility with the twisting maps. Finally, for $x\in\g$,
		$$
		D_U(\iota_U(x))=D_U(\pi_I(j(x)))=\pi_I(\widehat d(j(x)))=\pi_I(j(d(x)))=\iota_U(d(x)).
		$$
		Thus $\iota_Ud=D_U\iota_U$, completing the proof.
	\end{proof}
	
	\subsection{Construction of the universal enveloping algebra of Hom-Lie algebras with \texorpdfstring{$\alpha$}{alpha}-derivations}
	
	\begin{definition}
		Let $(\g,[\cdot,\cdot],\alpha,d)$ be an object of
		$\mathbf{HomLieDer}_\alpha$. An object $(U,\mu_U,\alpha_U,D_U)$ of
		$\mathbf{HomAsDer}_\alpha$ is called a \textbf{universal enveloping
		algebra} of $(\g,[\cdot,\cdot],\alpha,d)$ if there exists a morphism
		$$
		\iota:(\g,[\cdot,\cdot],\alpha,d)\to
		HLie_{\Der_\alpha}(U,\mu_U,\alpha_U,D_U)
		$$
		in $\mathbf{HomLieDer}_\alpha$ with the following universal property: for
		every object $(A,\mu_A,\alpha_A,D_A)$ of
		$\mathbf{HomAsDer}_\alpha$ and every morphism
		$$
		f:(\g,[\cdot,\cdot],\alpha,d)\to
		HLie_{\Der_\alpha}(A,\mu_A,\alpha_A,D_A)
		$$
		in $\mathbf{HomLieDer}_\alpha$, there exists a unique morphism
		$h:(U,\mu_U,\alpha_U,D_U)\to(A,\mu_A,\alpha_A,D_A)$ in
		$\mathbf{HomAsDer}_\alpha$ such that $f=h\circ\iota$. Equivalently, the
		following diagram commutes:
		\begin{center}
			\small
			\begin{tikzpicture}[
				>=stealth,
				every node/.style={inner sep=2pt}
				]
				\node (g) at (0,2.4) {$({\g},[\cdot,\cdot],\alpha,d)$};
				\node (U) at (0,0) {$(U,\mu_U,\alpha_U,D_U)$};
				\node (A) at (6.5,1.2) {$(A,\mu_A,\alpha_A,D_A)$};
				
				\draw[->] (g) -- node[left] {$\iota$} (U);
				\draw[->] (g.east) -- node[above] {$f$} (A.north west);
				\draw[->,dashed] (U.east) -- node[midway,below,yshift=-5pt] {$\exists!\,h$} (A.south west);
			\end{tikzpicture}
		\end{center}
	\end{definition}
	
	\begin{theorem}\label{thm:main}
		Let $(\g,[\cdot,\cdot],\alpha,d)$ be an object of
		$\mathbf{HomLieDer}_\alpha$. Then
		$(\U_{\mathrm{m}}(\g),\mu_U,\alpha_U,D_U)$ is a universal enveloping
		algebra of $(\g,[\cdot,\cdot],\alpha,d)$. Moreover, any other universal
		enveloping algebra
		$(\U_{\mathrm{m}}'(\g),\mu'_U,\alpha'_U,D'_U)$ of this object satisfies
		$$
		(\U_{\mathrm{m}}'(\g),\mu'_U,\alpha'_U,D'_U)\cong(\U_{\mathrm{m}}(\g),\mu_U,\alpha_U,D_U)
		$$
		in $\mathbf{HomAsDer}_\alpha$.
	\end{theorem}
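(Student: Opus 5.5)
We take $\iota:=\iota_U=\pi_I\circ j$, which is a morphism in $\mathbf{HomLieDer}_\alpha$ by Proposition~\ref{prop:iota-der}, and verify the universal property in three steps: construct $h$ from the free algebra, show that it descends through $I$, and show that it intertwines the derivations. The uniqueness-up-to-isomorphism statement then follows from the standard argument for universal objects.

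\emph{Existence.} Let $(A,\mu_A,\alpha_A,D_A)$ be an object of $\mathbf{HomAsDer}_\alpha$ and let $f:\g\to HLie_{\Der_\alpha}(A)$ be a morphism. Then $f$ is in particular a Hom-module morphism $(\g,\alpha)\to(A,\alpha_A)$, so Proposition~\ref{prop:free-mult-HNA} gives a unique morphism of Hom-nonassociative algebras $\overline f:\F_{\mathrm m}(\g)\to A$ with $\overline f\circ j=f$. We show $\overline f(I)=0$. Since $\ker\overline f$ is a two-sided ideal, it suffices to check the generators.
\begin{itemize}
\item[(i)] $\overline f(\mathrm{As}(u,v,w))=\alpha_A(\overline f u)(\overline f v\,\overline f w)-(\overline f u\,\overline f v)\alpha_A(\overline f w)=0$ by Hom-associativity of $A$.
\item[(ii)] $\overline f(R(x,y))=f([x,y])-[f(x),f(y)]_A=0$, because $f$ preserves brackets.
\end{itemize}
Hence $\overline f$ induces $h:\U_{\mathrm m}(\g)\to A$ with $h\circ\pi_I=\overline f$. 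This $h$ preserves products and twisting maps, and $h\circ\iota_U=f$.

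\emph{Compatibility with derivations.} We must show $h\circ D_U=D_A\circ h$, which is the main point beyond the classical argument. It is equivalent to $\overline f\circ\widehat d=D_A\circ\overline f$ on $\F_{\mathrm m}(\g)$. I would prove this with the uniqueness trick already used in Proposition~\ref{lem:extension-derivation-free}. Consider the Hom-nonassociative algebra $(A\oplus A,\star,\widetilde\alpha_A)$ of Proposition~\ref{prop:semidirect-HNA}, and the two maps
\[
\Psi_1(u):=(\overline f(u),\overline f(\widehat d(u))),\qquad \Psi_2(u):=(\overline f(u),D_A(\overline f(u))).
\]
\begin{itemize}
\item $\Psi_1$ is a morphism of Hom-nonassociative algebras by \eqref{eq:dhat-alpha} and \eqref{eq:dhat-leibniz}, together with the fact that $\overline f$ is a morphism.
\item $\Psi_2$ is a morphism because $D_A$ is an $\alpha_A$-derivation commuting with $\alpha_A$.
\item On generators, $\Psi_1(j(x))=(f(x),f(d(x)))$ by \eqref{eq:dhat-on-generators}, and $\Psi_2(j(x))=(f(x),D_A(f(x)))$. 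These agree because $f\circ d=D_A\circ f$.
\end{itemize}
The uniqueness part of Proposition~\ref{prop:free-mult-HNA} then gives $\Psi_1=\Psi_2$, and comparing second components yields the claim. The only thing to check carefully here is that both maps land in a multiplicative Hom-nonassociative algebra, which Proposition~\ref{prop:semidirect-HNA} guarantees. Thus $h$ is a morphism in $\mathbf{HomAsDer}_\alpha$.

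\emph{Uniqueness of $h$.} Suppose $h'$ is another morphism in $\mathbf{HomAsDer}_\alpha$ with $h'\circ\iota_U=f$. Then $h'\circ\pi_I$ is a morphism of Hom-nonassociative algebras extending $f$ along $j$, so $h'\circ\pi_I=\overline f=h\circ\pi_I$ by Proposition~\ref{prop:free-mult-HNA}. Since $\pi_I$ is surjective, $h'=h$.

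\emph{Uniqueness of the enveloping algebra.} If $(\U'_{\mathrm m}(\g),\iota')$ is another universal enveloping algebra, the two universal properties give morphisms $h:\U_{\mathrm m}(\g)\to\U'_{\mathrm m}(\g)$ and $h':\U'_{\mathrm m}(\g)\to\U_{\mathrm m}(\g)$ compatible with $\iota_U$ and $\iota'$. Then $h'\circ h$ and $\id$ both satisfy the universal property for $\iota_U$ itself, so $h'\circ h=\id$ by uniqueness. Symmetrically $h\circ h'=\id$, so $h$ is an isomorphism in $\mathbf{HomAsDer}_\alpha$.
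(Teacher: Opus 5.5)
Your proposal is correct and is essentially the paper's own proof. You use the same factorization through $\F_{\mathrm m}(\g)$ and the same generator check showing that the induced map kills $I$. For the derivations, you compare the same two morphisms $u\mapsto(\overline f(u),\overline f(\widehat d(u)))$ and $u\mapsto(\overline f(u),D_A(\overline f(u)))$ into $(A\oplus A,\star,\widetilde\alpha_A)$, and the two uniqueness claims rest, as in the paper, on surjectivity of $\pi_I$ and the standard universal-object argument.
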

	
	\begin{proof}
		Let $(A,\mu_A,\alpha_A,D_A)$ be an object of
		$\mathbf{HomAsDer}_\alpha$, and let
		\[
		f:(\g,[\cdot,\cdot],\alpha,d)\to
		HLie_{\Der_\alpha}(A,\mu_A,\alpha_A,D_A)
		\]
		be a morphism in
		$\mathbf{HomLieDer}_\alpha$.
		We prove that $f$ factors uniquely as indicated in the following diagram.
		\begin{figure}[H]
			\centering
			\small
			\begin{tikzpicture}[>=stealth, line width=0.45pt, every node/.style={inner sep=2pt}]
				\node (g) at (0,3) {$({\g},[\cdot,\cdot],\alpha,d)$};
				\node (F) at (0,1.5) {$(\F_{\mathrm{m}}({\g}),\mu_{\mathrm{m}},\alpha_{\mathrm{m}},\widehat d)$};
				\node (U) at (0,0) {$(\U_{\mathrm{m}}({\g}),\mu_U,\alpha_U,D_U)$};
				\node (A) at (7.4,1.5) {$(A,\mu_A,\alpha_A,D_A)$};
				
				\draw[->] (g) -- node[left] {$j$} (F);
				\draw[->] (F) -- node[left] {$\pi_I$} (U);
				\draw[->] (g.east) -- node[above] {$f$} (A.north west);
				\draw[->] (F.east) -- node[above] {$G$} (A.west);
				\node at (3.7,1.26) {\scriptsize Proposition~\ref{prop:free-mult-HNA}};
				\draw[->,dashed] (U.east) -- node[midway,below,yshift=-5pt] {$\exists!\,h$} (A.south west);
			\end{tikzpicture}
			\caption{The universal property of $(\U_{\mathrm{m}}(\g),\mu_U,\alpha_U,D_U)$.}
			\label{fig:universal-property-uea}
		\end{figure}
		
		The universal property of the free Hom-nonassociative algebra gives
		a unique morphism of Hom-nonassociative algebras
		$G:\F_{\mathrm{m}}(\g)\to A$ such that $G\circ j=f$.
		
		We first show that $G(I)=0$. Since $A$ is Hom-associative,
		$$
		G(\mathrm{As}(u,v,w))=\alpha_A(G(u))(G(v)G(w))-(G(u)G(v))\alpha_A(G(w))=0.
		$$
		Since $f$ is a Hom-Lie morphism into the commutator Hom-Lie algebra of $A$,
		$$
		G(R(x,y))=f([x,y])-(f(x)f(y)-f(y)f(x))=0.
		$$
		Thus $G$ vanishes on the generating set
		$S_{\mathrm{as}}\cup S_{\mathrm{br}}$ and hence on
		$I=\langle S_{\mathrm{as}}\cup S_{\mathrm{br}}\rangle$. Since $G(I)=0$,
		there exists a unique Hom-nonassociative algebra morphism $h:\U_{\mathrm{m}}(\g)\to A$ satisfying
		$G=h\circ \pi_I$.
		Both its source and target are Hom-associative, so $h$ is a morphism of
		Hom-associative algebras. Moreover, $f=h\circ\iota_U$.
		
		To prove that $h$ is compatible with the derivations, we consider the maps
		$\Theta_1,\Theta_2:\F_{\mathrm{m}}(\g)\to A\oplus A$ defined by
		$$
		\Theta_1(u):=(G(u),G(\widehat d(u))),
		\qquad
		\Theta_2(u):=(G(u),D_A(G(u))).
		$$
		Let $\widetilde\alpha_A$ and $\star$ denote the twisting map and product,
		respectively, on $A\oplus A$ from
		Proposition~\ref{prop:semidirect-HNA}. We show that both $\Theta_1$ and
		$\Theta_2$ are morphisms of Hom-nonassociative algebras.
		Since $G$ preserves the product and the twisting map,
		\eqref{eq:dhat-leibniz} gives, for $u,v\in\F_{\mathrm{m}}(\g)$,
		\begin{align*}
		\Theta_1(uv)
		&=(G(uv),G(\widehat d(uv)))\\
		&=(G(u)G(v), G(\widehat d(u))\alpha_A(G(v)) +\alpha_A(G(u))G(\widehat d(v)))\\
		&=\Theta_1(u)\star\Theta_1(v).
		\end{align*}
		Moreover, by \eqref{eq:dhat-alpha},
		\begin{align*}
		\Theta_1(\alpha_{\mathrm{m}}(u))
		&=(G(\alpha_{\mathrm{m}}(u)),G(\widehat d(\alpha_{\mathrm{m}}(u))))\\
		&=(\alpha_A(G(u)),\alpha_A(G(\widehat d(u))))\\
		&=\widetilde\alpha_A(\Theta_1(u)).
		\end{align*}
		Thus $\Theta_1$ is a morphism of Hom-nonassociative algebras.
		Since $D_A$ is an $\alpha_A$-derivation of $A$, we also have
		\begin{align*}
		\Theta_2(uv)
		&=(G(uv),D_A(G(uv)))\\
		&=(G(u)G(v),D_A(G(u))\alpha_A(G(v))+\alpha_A(G(u))D_A(G(v)))\\
		&=\Theta_2(u)\star\Theta_2(v),
		\end{align*}
		and the identity $D_A\alpha_A=\alpha_A D_A$ gives
		\begin{align*}
		\Theta_2(\alpha_{\mathrm{m}}(u))
		&=(G(\alpha_{\mathrm{m}}(u)),D_A(G(\alpha_{\mathrm{m}}(u))))\\
		&=(\alpha_A(G(u)),\alpha_A(D_A(G(u))))\\
		&=\widetilde\alpha_A(\Theta_2(u)).
		\end{align*}
		Hence $\Theta_2$ is a morphism of Hom-nonassociative algebras.
		The two morphisms agree on the generators: for every $x\in \g$,
		$$
		\Theta_1(j(x))=(f(x),f(d(x)))=\Theta_2(j(x)),
		$$
		because $f\circ d=D_A\circ f$. By uniqueness in the universal property of
		$\F_{\mathrm{m}}(\g)$, we obtain $\Theta_1=\Theta_2$. Hence $G\circ\widehat d=D_A\circ G$. For
		$\bar u=\pi_I(u)\in \U_{\mathrm{m}}(\g)$, this gives
		$$
		h(D_U(\bar u))=h(\pi_I(\widehat d(u)))=G(\widehat d(u))=D_A(G(u))=D_A(h(\bar u)).
		$$
		Therefore, $h\circ D_U=D_A\circ h$, so $h$ is a morphism in
		$\mathbf{HomAsDer}_\alpha$.
		
		Finally, suppose that $h'$ is another morphism in
		$\mathbf{HomAsDer}_\alpha$ satisfying $h'\circ \iota_U=f$. Then
		$h'\circ\pi_I:\F_{\mathrm{m}}(\g)\to A$ is a morphism of
		Hom-nonassociative algebras extending $f$. By the uniqueness of $G$,
		$h'\circ\pi_I=G=h\circ\pi_I$. Since $\pi_I$ is surjective, $h'=h$.
		
		To prove uniqueness up to isomorphism, let
		$(\U_{\mathrm{m}}'(\g),\mu'_U,\alpha'_U,D'_U)$ be another
		universal enveloping algebra of
		$(\g,[\cdot,\cdot],\alpha,d)$, with canonical morphism
		$\iota'_U:\g\to\U_{\mathrm{m}}'(\g)$. The two universal properties give
		morphisms $\Phi:\U_{\mathrm{m}}(\g)\to\U_{\mathrm{m}}'(\g)$ and
		$\Psi:\U_{\mathrm{m}}'(\g)\to\U_{\mathrm{m}}(\g)$ in $\mathbf{HomAsDer}_\alpha$ such that
		$\Phi\circ\iota_U=\iota'_U$ and $\Psi\circ\iota'_U=\iota_U$. Hence
		$$
		(\Psi\circ\Phi)\circ\iota_U=\iota_U,
		\qquad
		(\Phi\circ\Psi)\circ\iota'_U=\iota'_U.
		$$
		By uniqueness in the respective universal properties,
		$$
		\Psi\circ\Phi=\id_{\U_{\mathrm{m}}(\g)},
		\qquad
		\Phi\circ\Psi=\id_{\U_{\mathrm{m}}'(\g)}.
		$$
		Therefore, $\Phi$ is an isomorphism in $\mathbf{HomAsDer}_\alpha$.
	\end{proof}

	We now use the universal property to define the enveloping functor and
	establish its adjunction with the commutator functor.
	
	\begin{corollary}\label{cor:functor-adjunction}
		There is a functor
		$$
		\U_{\mathrm{m},\Der_\alpha}:\mathbf{HomLieDer}_\alpha\to \mathbf{HomAsDer}_\alpha,
		\qquad
		(\g,[\cdot,\cdot],\alpha,d)
		\mapsto
		(\U_{\mathrm{m}}(\g),\mu_U,\alpha_U,D_U),
		$$
		which is left adjoint to
		$HLie_{\Der_\alpha}:\mathbf{HomAsDer}_\alpha\to
		\mathbf{HomLieDer}_\alpha$.
	\end{corollary}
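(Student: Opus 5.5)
The plan is to use the standard fact that a universal arrow from every object to a functor assembles into a left adjoint; this is the content of Theorem~\ref{thm:main}.

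\emph{Definition on objects and morphisms.} On objects, set $\U_{\mathrm{m},\Der_\alpha}(\g,[\cdot,\cdot],\alpha,d):=(\U_{\mathrm{m}}(\g),\mu_U,\alpha_U,D_U)$. This is an object of $\mathbf{HomAsDer}_\alpha$ by Proposition~\ref{prop:Um-HomAsDer}, and it comes with the unit morphism $\iota_U$ of Proposition~\ref{prop:iota-der}. For a morphism $\phi:(\g,[\cdot,\cdot],\alpha,d)\to(\h,[\cdot,\cdot]_\h,\beta,\delta)$ in $\mathbf{HomLieDer}_\alpha$, consider the composite $\iota_{U,\h}\circ\phi$. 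It is a morphism from $\g$ to $HLie_{\Der_\alpha}(\U_{\mathrm{m}}(\h))$ in $\mathbf{HomLieDer}_\alpha$, because compositions of morphisms in this category are again morphisms. Theorem~\ref{thm:main} then gives a unique morphism $\U_{\mathrm{m},\Der_\alpha}(\phi):\U_{\mathrm{m}}(\g)\to\U_{\mathrm{m}}(\h)$ in $\mathbf{HomAsDer}_\alpha$ satisfying
\[
\U_{\mathrm{m},\Der_\alpha}(\phi)\circ\iota_{U,\g}=\iota_{U,\h}\circ\phi .
\]

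\emph{Functoriality.} Both functor axioms follow from the uniqueness part of Theorem~\ref{thm:main}. The identity $\id_{\U_{\mathrm{m}}(\g)}$ satisfies the defining equation for $\phi=\id_\g$, so $\U_{\mathrm{m},\Der_\alpha}(\id_\g)=\id$. For composable $\phi$ and $\psi$, the map $\U_{\mathrm{m},\Der_\alpha}(\psi)\circ\U_{\mathrm{m},\Der_\alpha}(\phi)$ satisfies the defining equation for $\psi\circ\phi$, so it equals $\U_{\mathrm{m},\Der_\alpha}(\psi\circ\phi)$.

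\emph{Adjunction.} For each object $A=(A,\mu_A,\alpha_A,D_A)$ of $\mathbf{HomAsDer}_\alpha$, define
\[
\Theta_{\g,A}:\mathrm{Hom}_{\mathbf{HomAsDer}_\alpha}(\U_{\mathrm{m}}(\g),A)\to\mathrm{Hom}_{\mathbf{HomLieDer}_\alpha}(\g,HLie_{\Der_\alpha}(A)),\qquad h\mapsto HLie_{\Der_\alpha}(h)\circ\iota_U .
\]
Here $HLie_{\Der_\alpha}(h)$ is $h$ itself regarded as a morphism, by Proposition~\ref{taishezhijiandeduiying}. Theorem~\ref{thm:main} says exactly that $\Theta_{\g,A}$ is bijective: existence of $h$ gives surjectivity, and uniqueness of $h$ gives injectivity.

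It remains to check naturality in both variables.
\begin{itemize}
\item In $A$: for a morphism $k:A\to B$, we have $\Theta_{\g,B}(k\circ h)=k\circ h\circ\iota_U=k\circ\Theta_{\g,A}(h)$, which is immediate from associativity of composition.
\item In $\g$: for $\phi:\g'\to\g$, we have
\[
\Theta_{\g',A}\bigl(h\circ\U_{\mathrm{m},\Der_\alpha}(\phi)\bigr)=h\circ\U_{\mathrm{m},\Der_\alpha}(\phi)\circ\iota_{U,\g'}=h\circ\iota_{U,\g}\circ\phi=\Theta_{\g,A}(h)\circ\phi,
\]
using the defining equation of $\U_{\mathrm{m},\Der_\alpha}(\phi)$.
\end{itemize}
This gives the adjunction $\U_{\mathrm{m},\Der_\alpha}\dashv HLie_{\Der_\alpha}$, with unit $\iota_U$.

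The proof has no real obstacle. The only point needing care is to confirm that the composite $\iota_{U,\h}\circ\phi$ is a morphism into the commutator object $HLie_{\Der_\alpha}(\U_{\mathrm{m}}(\h))$, so that Theorem~\ref{thm:main} applies. This holds because $\iota_{U,\h}$ is such a morphism by Proposition~\ref{prop:iota-der}, and composing it with $\phi$ preserves brackets, twisting maps, and derivations.
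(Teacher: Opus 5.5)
Your proposal is correct and follows the paper's proof. Both define $\U_{\mathrm{m},\Der_\alpha}(f)$ by applying Theorem~\ref{thm:main} to $\iota_{\h}\circ f$, obtain the functor laws from uniqueness, and read off the hom-set bijection from Theorem~\ref{thm:main}; your explicit check of naturality in both variables spells out a step the paper only asserts.
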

	
	\begin{proof}
		Let $f:(\g,[\cdot,\cdot],\alpha,d)\to(\h,[\cdot,\cdot]_{\h},\beta,\delta)$ be a morphism in
		$\mathbf{HomLieDer}_\alpha$. We denote the canonical maps by $\iota_{\g}:\g\to
		\U_{\mathrm{m}}(\g)$ and $\iota_{\h}:\h\to \U_{\mathrm{m}}(\h)$. By Theorem~\ref{thm:main},
		applied to the morphism
		$\iota_{\h}\circ f$, there exists a unique morphism
		$\U_{\mathrm{m},\Der_\alpha}(f):\U_{\mathrm{m}}(\g)\to \U_{\mathrm{m}}(\h)$ in
		$\mathbf{HomAsDer}_\alpha$ satisfying
		$\U_{\mathrm{m},\Der_\alpha}(f)\circ\iota_{\g}=\iota_{\h}\circ f$. The same uniqueness statement
		gives the identity and composition laws:
		in each case, the two morphisms agree after composition with the canonical
		map. Thus the assignment defines a functor.
		
		Theorem~\ref{thm:main} gives a bijection, natural in
		$(\g,[\cdot,\cdot],\alpha,d)$ and $(A,\mu_A,\alpha_A,D_A)$, between
		morphisms
		$$
		\U_{\mathrm{m},\Der_\alpha}(\g,[\cdot,\cdot],\alpha,d)\to (A,\mu_A,\alpha_A,D_A)
		$$
		in $\mathbf{HomAsDer}_\alpha$ and morphisms
		$$
		(\g,[\cdot,\cdot],\alpha,d)\to HLie_{\Der_\alpha}(A,\mu_A,\alpha_A,D_A)
		$$
		in $\mathbf{HomLieDer}_\alpha$. Therefore,
		$\U_{\mathrm{m},\Der_\alpha}$ is left adjoint to $HLie_{\Der_\alpha}$.
	\end{proof}

\section{The PBW theorem for regular Hom-Lie algebras with an \texorpdfstring{$\alpha$}{alpha}-derivation}
	
	Throughout this section, the twisting map is assumed to be bijective.
	We first untwist the bracket or product and the $\alpha$-derivation to
	obtain Lie and associative algebras with derivations. These constructions,
	which form the basis of the comparison and PBW theorems, are given in
	Propositions~\ref{prop:reg-lie} and~\ref{prop:reg-ass}.
	
	\subsection{Lie and associative algebras with derivations}

	Recall that a derivation of a Lie algebra $(\g,[\cdot,\cdot]_0)$ is a
	linear map $\delta:\g\to\g$ satisfying
	$$
	\delta([x,y]_0)
	=
	[\delta(x),y]_0+[x,\delta(y)]_0
	\qquad \forall x,y\in\g.
	$$
	
	\begin{proposition}\label{prop:reg-lie}
		Let $(\g,[\cdot,\cdot],\alpha,d)$ be a regular Hom-Lie algebra with an
		$\alpha$-derivation $d$. Then $(\g,\alpha^{-1}\circ[\cdot,\cdot])$
		is a Lie algebra, and $\alpha^{-1}\circ d$ is a derivation of this algebra.
	\end{proposition}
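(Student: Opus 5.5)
Set $[x,y]_0:=\alpha^{-1}([x,y])$ and $\delta:=\alpha^{-1}\circ d$. The plan is to derive three facts about the inverse map from the Hom-Lie axioms, and then push $\alpha^{-1}$ through each identity.

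First, from the multiplicativity condition \eqref{Liemultiplicativity} and bijectivity of $\alpha$, I will show that $\alpha^{-1}$ is also compatible with the bracket:
\[
\alpha^{-1}([x,y])=[\alpha^{-1}(x),\alpha^{-1}(y)].
\]
To see this, apply $\alpha$ to the right-hand side. This gives $[x,y]$, and injectivity of $\alpha$ then yields the identity.

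Second, from $d\alpha=\alpha d$ I get $\alpha^{-1}d=d\alpha^{-1}$, by multiplying on both sides by $\alpha^{-1}$. It follows that $\delta$ commutes with $\alpha$. I will also check that $\alpha$ is an endomorphism of $[\cdot,\cdot]_0$, since
\[
\alpha([x,y]_0)=[x,y]=\alpha^{-1}([\alpha x,\alpha y])=[\alpha x,\alpha y]_0,
\]
although this is not strictly needed.

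For the Lie algebra claim, bilinearity and skew-symmetry of $[\cdot,\cdot]_0$ are immediate. For the Jacobi identity, I compute
\[
[x,[y,z]_0]_0=\alpha^{-1}[x,\alpha^{-1}[y,z]]=\alpha^{-2}[\alpha x,[y,z]],
\]
using the compatibility of $\alpha^{-1}$ with the bracket established in the first step. Summing cyclically and applying $\alpha^{-2}$ to the Hom-Jacobi identity gives zero.

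For the derivation claim, I compute
\[
\delta([x,y]_0)=\alpha^{-2}d([x,y])=\alpha^{-2}\bigl([dx,\alpha y]+[\alpha x,dy]\bigr).
\]
Next I rewrite
\[
\alpha^{-2}[dx,\alpha y]=\alpha^{-1}[\alpha^{-1}dx,y]=[\delta x,y]_0,
\]
and symmetrically
\[
\alpha^{-2}[\alpha x,dy]=[x,\delta y]_0.
\]
Here I again use the compatibility of $\alpha^{-1}$ with the bracket, together with the relation $\alpha^{-1}d=d\alpha^{-1}$ from the second step.

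No step is a real obstacle. The only care needed is to justify moving $\alpha^{-1}$ inside brackets, which is exactly the compatibility established in the first step.
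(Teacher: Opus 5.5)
Your proposal is correct and follows essentially the same route as the paper. The paper's key identity $[x,\alpha^{-1}(u)]=\alpha^{-1}([\alpha(x),u])$ is your compatibility of $\alpha^{-1}$ with the bracket, and the paper likewise gets both the Jacobi identity and the derivation property by applying $\alpha^{-2}$ to the Hom-Jacobi identity and to the twisted Leibniz rule.
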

	
	\begin{proof}
		Set $[x,y]_0:=\alpha^{-1}([x,y])$ and $\delta:=\alpha^{-1}\circ d$.
		The bracket $[\cdot,\cdot]_0$ is skew-symmetric because
		$[\cdot,\cdot]$ is skew-symmetric. Since $\alpha$ is multiplicative and
		bijective, we have
		$$
		[x,\alpha^{-1}(u)]=\alpha^{-1}([\alpha(x),u])
		\qquad \forall x,u\in\g.
		$$
		Using this identity and the Hom-Jacobi identity, we obtain, for all $x,y,z\in\g$,
		\begin{align*}
		&[x,[y,z]_0]_0+[y,[z,x]_0]_0+[z,[x,y]_0]_0\\
		&\quad=\alpha^{-2}\bigl([\alpha(x),[y,z]]+[\alpha(y),[z,x]]+[\alpha(z),[x,y]]\bigr)=0.
		\end{align*}
		Thus $(\g,[\cdot,\cdot]_0)$ is a Lie algebra.
		
		To show that $\delta$ is a derivation, let $x,y\in\g$. Then
		\begin{align*}
			\delta([x,y]_0)
			&=
			\alpha^{-1}d(\alpha^{-1}([x,y]))\\
			&=
			\alpha^{-2}d([x,y])
			\qquad (\text{since } d\alpha=\alpha d)\\
			&=
			\alpha^{-2}([d(x),\alpha(y)]+[\alpha(x),d(y)])\\
			&=
			\alpha^{-1}([\alpha^{-1}d(x),y])+\alpha^{-1}([x,\alpha^{-1}d(y)])\\
			&=
			[\delta(x),y]_0+[x,\delta(y)]_0.
		\end{align*}
		Thus $\delta$ is a derivation, as required.
	\end{proof}
	
	\begin{remark}\label{rem:regular-lie-untwisted}
		With the notation used in the proof of Proposition~\ref{prop:reg-lie}, $\alpha$ is an
		automorphism of the Lie algebra $\g_0=(\g,[\cdot,\cdot]_0)$, since
		$[\alpha(x),\alpha(y)]_0
		=\alpha^{-1}([\alpha(x),\alpha(y)])
		=\alpha([x,y]_0).$
		Moreover, $\delta=\alpha^{-1}\circ d$ and
		$d\circ\alpha=\alpha\circ d$ imply
		$\alpha\circ\delta=\delta\circ\alpha=d$. Consequently,
		$[x,y]=\alpha([x,y]_0)$ and $d=\alpha\circ\delta$.
	\end{remark}
	
	The same untwisting procedure applies to Hom-associative algebras.
	
	\begin{proposition}\label{prop:reg-ass}
		Let $(A,\mu,\alpha,D)$ be a regular Hom-associative algebra with an
		$\alpha$-derivation $D$. Then $(A,\alpha^{-1}\circ\mu)$ is an
		associative algebra, and $\alpha^{-1}\circ D$ is a derivation of this algebra.
	\end{proposition}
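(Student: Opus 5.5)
We mirror the proof of Proposition~\ref{prop:reg-lie}. Set $a*b:=\alpha^{-1}(ab)$ and $\Delta:=\alpha^{-1}\circ D$. The basic tool is that $\alpha^{-1}$ is multiplicative for $\mu$: since $\alpha$ is bijective and multiplicative, $\alpha(\alpha^{-1}(a)\alpha^{-1}(b))=ab$, hence
\[
\alpha^{-1}(ab)=\alpha^{-1}(a)\alpha^{-1}(b),\qquad a,b\in A.
\]
In particular $\alpha$ and $\alpha^{-1}$ are also multiplicative for $*$. Moreover $D\alpha=\alpha D$ gives $D\alpha^{-1}=\alpha^{-1}D$, so $\Delta$ commutes with $\alpha^{\pm1}$.

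First I will prove associativity of $*$. For $a,b,c\in A$, using multiplicativity of $\alpha^{-1}$,
\[
a*(b*c)=\alpha^{-1}\bigl(a\,\alpha^{-1}(bc)\bigr)=\alpha^{-2}\bigl(\alpha(a)(bc)\bigr),
\]
and similarly $(a*b)*c=\alpha^{-2}\bigl((ab)\alpha(c)\bigr)$. Hom-associativity identifies the two arguments, so $(A,*)$ is associative. Equivalently, one can apply \cite[Corollary~2.5]{HomAlgebraTwists} in reverse. Bilinearity is clear.

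Next I will show the Leibniz rule for $\Delta$:
\[
\Delta(a*b)=\alpha^{-1}D\alpha^{-1}(ab)=\alpha^{-2}D(ab)=\alpha^{-2}\bigl(D(a)\alpha(b)+\alpha(a)D(b)\bigr).
\]
Distributing $\alpha^{-2}$ over the products as $\alpha^{-1}\circ(\alpha^{-1}\cdot\,\alpha^{-1})$ gives $\alpha^{-1}\bigl(\Delta(a)\,b\bigr)+\alpha^{-1}\bigl(a\,\Delta(b)\bigr)=\Delta(a)*b+a*\Delta(b)$.

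No step is a real obstacle. The only point requiring care is the commutation $D\alpha^{-1}=\alpha^{-1}D$ and the multiplicativity of $\alpha^{-1}$, both of which are immediate from bijectivity. As in Remark~\ref{rem:regular-lie-untwisted}, I would also note that $\alpha$ is an automorphism of $(A,*)$ commuting with $\Delta$, and that $\mu=\alpha\circ *$ and $D=\alpha\circ\Delta$.
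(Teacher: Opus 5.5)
Your proof is correct and is essentially the paper's argument. The paper also sets $a\ast b=\alpha^{-1}(ab)$ and reduces associativity to Hom-associativity by identifying $\alpha^{2}((a\ast b)\ast c)=(ab)\alpha(c)$ and $\alpha^{2}(a\ast(b\ast c))=\alpha(a)(bc)$, and it checks the Leibniz rule for $\Delta=\alpha^{-1}D$ with the same $\alpha^{-2}D(ab)$ computation. The aside about applying Yau's twisting result ``in reverse'' is unnecessary, and that result does not literally state the converse, so you can drop it.
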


	\begin{proof}
		Set $a\ast b:=\alpha^{-1}(ab)$ and $\Delta:=\alpha^{-1}\circ D$.
		For $a,b,c\in A$, multiplicativity of $\alpha$ gives
		\begin{align*}
		\alpha^2((a\ast b)\ast c)&=(ab)\alpha(c),\\
		\alpha^2(a\ast(b\ast c))&=\alpha(a)(bc).
		\end{align*}
		Hom-associativity makes the right-hand sides equal. Since $\alpha$ is
		bijective, the product $\ast$ is associative.
		
		For the derivation property, let $a,b\in A$. We have
		\begin{align*}
			\Delta(a\ast b)
			&=
			\alpha^{-1}D(\alpha^{-1}(ab))
			=
			\alpha^{-2}D(ab)\\
			&=
			\alpha^{-2}(D(a)\alpha(b)+\alpha(a)D(b))\\
			&=
			\alpha^{-1}(D(a))\ast b+a\ast \alpha^{-1}(D(b))\\
			&=
			\Delta(a)\ast b+a\ast \Delta(b).
		\end{align*}
		Thus $\Delta$ is a derivation.
	\end{proof}
	
	\begin{remark}\label{rem:regular-associative-untwisted}
		With the notation used in the proof of Proposition~\ref{prop:reg-ass}, $\alpha$ is an
		automorphism of the associative algebra $(A,\ast)$, since
		$\alpha(a\ast b)
		=ab
		=\alpha(a)\ast\alpha(b).$
		Moreover, $\Delta=\alpha^{-1}\circ D$ and
		$D\circ\alpha=\alpha\circ D$ imply
		$\alpha\circ\Delta=\Delta\circ\alpha=D$. Consequently,
		$ab=\alpha(a\ast b)$ and $D=\alpha\circ\Delta$.
	\end{remark}

	\begin{remark}
		By Propositions~\ref{prop:reg-lie} and~\ref{prop:reg-ass}, $\delta$ is a
		derivation of the Lie algebra $\g_0$, and $\Delta$ is a derivation of the
		associative algebra $(A,\ast)$. In particular, $(\g_0,\delta)$ is a
		$\lambda$-differential Lie algebra in the sense of
		\cite[Definition~2.5]{PengZhangGaoLuo} for $\lambda=0$ and the zero
		derivation on the ground field, since
		$\delta([x,y]_0)
		=[\delta(x),y]_0+[x,\delta(y)]_0.$
		Similarly, $(A,\ast,\Delta)$ satisfies
		$
		\Delta(a\ast b)
		=\Delta(a)\ast b+a\ast\Delta(b).
		$
	\end{remark}

	\subsection{Comparison with the twisted nonunital enveloping algebra}

	We first show that the enveloping construction preserves regularity.
	
	\begin{lemma}\label{lem:regularity-envelope}
		Let $(\g,[\cdot,\cdot],\alpha,d)$ be a regular Hom-Lie algebra with an
		$\alpha$-derivation, and let
		$(\U_{\mathrm{m}}(\g),\mu_U,\alpha_U,D_U)$ be its universal enveloping
		Hom-associative algebra. Then $\alpha_U$ is bijective.
	\end{lemma}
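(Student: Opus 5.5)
The plan is to build an explicit inverse of $\alpha_U$ by applying the universal property of Theorem~\ref{thm:main} to a suitably twisted copy of $\U_{\mathrm m}(\g)$. Since $\alpha$ is bijective and commutes with $d$, the map $\alpha^{-1}$ is itself an automorphism of $(\g,[\cdot,\cdot],\alpha,d)$ in $\mathbf{HomLieDer}_\alpha$. Indeed, $\alpha^{-1}([x,y])=[\alpha^{-1}(x),\alpha^{-1}(y)]$ follows from multiplicativity. Also $\alpha^{-1}$ commutes with $\alpha$ and with $d$.

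\textbf{Step 1: apply functoriality.} Applying the functor $\U_{\mathrm m,\Der_\alpha}$ of Corollary~\ref{cor:functor-adjunction} to $\alpha^{-1}$ gives a morphism $\beta:=\U_{\mathrm m,\Der_\alpha}(\alpha^{-1})$ in $\mathbf{HomAsDer}_\alpha$ with $\beta\circ\iota_U=\iota_U\circ\alpha^{-1}$. In the same way, $\alpha$ itself is a morphism in $\mathbf{HomLieDer}_\alpha$. I claim that $\U_{\mathrm m,\Der_\alpha}(\alpha)=\alpha_U$. To check this, I verify that $\alpha_U$ is a morphism $(\U_{\mathrm m}(\g),\mu_U,\alpha_U,D_U)\to(\U_{\mathrm m}(\g),\mu_U,\alpha_U,D_U)$ in $\mathbf{HomAsDer}_\alpha$:
\begin{itemize}
\item it preserves products by multiplicativity;
\item it commutes with $\alpha_U$ trivially;
\item it commutes with $D_U$ by Proposition~\ref{prop:Um-HomAsDer}.
\end{itemize}
It also satisfies $\alpha_U\circ\iota_U=\iota_U\circ\alpha$, because $\iota_U=\pi_I\circ j$ is a Hom-module map. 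The uniqueness in Theorem~\ref{thm:main} then gives the claim.

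\textbf{Step 2: conclude.} Functoriality now yields
\[
\alpha_U\circ\beta=\U_{\mathrm m,\Der_\alpha}(\alpha\circ\alpha^{-1})=\id,
\qquad
\beta\circ\alpha_U=\id .
\]
Hence $\alpha_U$ is bijective with inverse $\beta$. Alternatively, I can avoid the functor language and compare $\alpha_U\circ\beta$ and $\beta\circ\alpha_U$ with $\id$ directly. Both composites are morphisms in $\mathbf{HomAsDer}_\alpha$ that agree with $\id$ after composing with $\iota_U$, so uniqueness in Theorem~\ref{thm:main} identifies each of them with $\id$.

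\textbf{Main obstacle.} The only delicate point is confirming that $\alpha_U$ really is an endomorphism in $\mathbf{HomAsDer}_\alpha$. In particular, I must check that $\alpha_U$ commutes with $D_U$; this comes from \eqref{eq:dhat-alpha} passing to the quotient. I must also check that the categories require no additional compatibility. Since morphisms are only required to preserve the product, the twisting map and the derivation, these checks are routine. The uniqueness argument then carries the proof, with no need to analyze the ideal $I$ explicitly.
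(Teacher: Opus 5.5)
Your proposal is correct and is essentially the paper's proof. The paper likewise checks that $\alpha^{-1}$ is a morphism in $\mathbf{HomLieDer}_\alpha$, uses Theorem~\ref{thm:main} to obtain $T$ with $T\circ\iota_U=\iota_U\circ\alpha^{-1}$, notes that $\alpha_U$ is an endomorphism in $\mathbf{HomAsDer}_\alpha$ with $\alpha_U\circ\iota_U=\iota_U\circ\alpha$, and concludes $\alpha_U T=T\alpha_U=\id$ by uniqueness; this is exactly your ``alternative'' direct argument, and your functorial phrasing via $\U_{\mathrm m,\Der_\alpha}$ is the same uniqueness argument repackaged (no ``twisted copy'' of $\U_{\mathrm m}(\g)$ is actually needed, as your own steps show).
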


	\begin{proof}
		We first verify that $\alpha^{-1}:\g\to\g$ is a morphism in
		$\mathbf{HomLieDer}_\alpha$. By multiplicativity and the bijectivity
		of $\alpha$, we have
		$$
		\alpha([\alpha^{-1}(x),\alpha^{-1}(y)])=[x,y],
		$$
		and hence
		$$
		[\alpha^{-1}(x),\alpha^{-1}(y)]=\alpha^{-1}([x,y]).
		$$
		Moreover, $\alpha^{-1}$ commutes with both $\alpha$ and $d$, since
		$d\alpha=\alpha d$.
		
		Consider the morphism
		$$
		\iota_U\circ\alpha^{-1}:\g\to
		HLie_{\Der_\alpha}(\U_{\mathrm{m}}(\g),\mu_U,\alpha_U,D_U).
		$$
		By the universal property of $\U_{\mathrm{m}}(\g)$, there exists a unique morphism
		$$
		T:(\U_{\mathrm{m}}(\g),\mu_U,\alpha_U,D_U)\to
		(\U_{\mathrm{m}}(\g),\mu_U,\alpha_U,D_U)
		$$
		in $\mathbf{HomAsDer}_\alpha$ such that $T\circ\iota_U=\iota_U\circ\alpha^{-1}$. On the other
		hand, $\alpha_U$ is an endomorphism of
		$(\U_{\mathrm{m}}(\g),\mu_U,\alpha_U,D_U)$ in
		$\mathbf{HomAsDer}_\alpha$, because it preserves the product and
		$D_U\alpha_U=\alpha_U D_U$. Moreover, $\alpha_U\circ\iota_U=\iota_U\circ\alpha$. Therefore
		$$
		(\alpha_U\circ T)\circ\iota_U=\iota_U,
		\qquad
		(T\circ\alpha_U)\circ\iota_U=\iota_U.
		$$
		The uniqueness in the universal property applied to $\iota_U$ implies that both
		$\alpha_U\circ T$ and $T\circ\alpha_U$ are the identity on
		$\U_{\mathrm{m}}(\g)$. Hence $T=\alpha_U^{-1}$, and $\alpha_U$ is
		bijective.
	\end{proof}

	We now construct the classical enveloping algebra to be used in the comparison.
	Let $(\g,[\cdot,\cdot],\alpha,d)$ be a regular Hom-Lie algebra with an
	$\alpha$-derivation. For $x,y\in\g$, set
	$[x,y]_0:=\alpha^{-1}([x,y])$ and
	$\delta(x):=\alpha^{-1}(d(x))$. We denote the resulting Lie algebra
	$(\g,[\cdot,\cdot]_0)$ by $\g_0$. By Proposition~\ref{prop:reg-lie},
	$\delta$ is a derivation of $\g_0$.
	
	Let $T^+(\g):=\bigoplus_{n\geq 1}\g^{\otimes n}$,
	and let $J_0$ be the two-sided ideal generated by
	$x\otimes y-y\otimes x-[x,y]_0$, $x,y\in\g$.
	Define the nonunital universal enveloping algebra by
	$
	\U^+(\g_0):=T^+(\g)/J_0,
	$
	and let $\iota_0:\g_0\to\U^+(\g_0)$ be the canonical map. For every
	associative algebra $B$ and every Lie algebra morphism
	$f:\g_0\to B^{-}$, where $B^{-}$ denotes the commutator Lie algebra of $B$,
	there exists a unique algebra morphism
	$\bar f:\U^+(\g_0)\to B$ such that
	$\bar f\circ\iota_0=f$. Here the associative algebras are not required to
	have units, and the morphisms are not required to preserve units.
	
	The maps $\alpha$ and $\delta$ extend uniquely to an algebra automorphism
	$\widetilde{\alpha}$ and a derivation $\widetilde{\delta}$ of
	$\U^+(\g_0)$ such that
	$
	\widetilde{\alpha}\circ\iota_0=\iota_0\circ\alpha
	$
	and
	$
	\widetilde{\delta}\circ\iota_0=\iota_0\circ\delta.
	$
	They satisfy
	$
	\widetilde{\alpha}\widetilde{\delta}
	=\widetilde{\delta}\widetilde{\alpha}.
	$
	Define
	$
	a\cdot_\alpha b:=\widetilde{\alpha}(ab)
	$
	and
	$
	\widetilde d:=\widetilde{\alpha}\circ\widetilde{\delta},
	$
	and write
	$
	\U^+(\g_0)^\alpha
	:=(\U^+(\g_0),\cdot_\alpha,\widetilde{\alpha},\widetilde d).
	$
	
	\begin{proposition}\label{prop:classical-yau-twist}
		With the notation above, $\U^+(\g_0)^\alpha$ is an object of
		$\mathbf{HomAsDer}_\alpha$, and $\iota_0$ is a morphism
		$$
		(\g,[\cdot,\cdot],\alpha,d)
		\to HLie_{\Der_\alpha}(\U^+(\g_0)^\alpha).
		$$
	\end{proposition}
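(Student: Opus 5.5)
The proof has two parts. First we show that $\U^+(\g_0)^\alpha$ is an object of $\mathbf{HomAsDer}_\alpha$, by applying the unnamed proposition preceding Subsection~2.2 (the associative analogue of Proposition~\ref{diyigelizi}). Then we check the three conditions for $\iota_0$ to be a morphism in $\mathbf{HomLieDer}_\alpha$.

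For the first part, take the associative algebra $(\U^+(\g_0),\cdot)$, the derivation $\Delta:=\widetilde{\delta}$ and the algebra endomorphism $\widetilde{\alpha}$. The unnamed proposition needs $\widetilde{\alpha}\widetilde{\delta}=\widetilde{\delta}\widetilde{\alpha}$. This was asserted before the statement, but I would justify it. Both $\widetilde{\alpha}\widetilde{\delta}\widetilde{\alpha}^{-1}$ and $\widetilde{\delta}$ are derivations of $\U^+(\g_0)$. On $\iota_0(\g)$ they agree, because $\alpha\delta=\delta\alpha$ by Remark~\ref{rem:regular-lie-untwisted}. A derivation of $\U^+(\g_0)$ is determined by its values on the generating set $\iota_0(\g)$, so the two derivations coincide.

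For completeness I would also recall how the extensions arise. The map $\widetilde{\alpha}$ comes from the universal property applied to $\iota_0\circ\alpha$; this is legitimate because $\alpha$ is a Lie automorphism of $\g_0$ (Remark~\ref{rem:regular-lie-untwisted}). Its inverse is induced by $\alpha^{-1}$. The map $\widetilde{\delta}$ is obtained through the usual semidirect trick: map $x\mapsto(\iota_0 x,\iota_0\delta x)$ into the associative algebra $\U^+(\g_0)\oplus\U^+(\g_0)$ with product $(a,u)(b,v)=(ab,ub+av)$, which is the associative analogue of Proposition~\ref{prop:semidirect-HNA}. The unnamed proposition then shows that $(\U^+(\g_0),\cdot_\alpha,\widetilde{\alpha})$ is Hom-associative and that $\widetilde d=\widetilde{\alpha}\widetilde{\delta}$ is an $\widetilde{\alpha}$-derivation. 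Multiplicativity of $\widetilde{\alpha}$ for $\cdot_\alpha$ also holds: $\widetilde{\alpha}(\widetilde{\alpha}(ab))=\widetilde{\alpha}(\widetilde{\alpha}(a)\widetilde{\alpha}(b))$ because $\widetilde{\alpha}$ is an algebra map.

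For the second part, we check that $\iota_0$ preserves the bracket, the twisting map and the derivation. For $x,y\in\g$ we compute
\[
\iota_0(x)\cdot_\alpha\iota_0(y)-\iota_0(y)\cdot_\alpha\iota_0(x)
=\widetilde{\alpha}\bigl(\iota_0(x)\iota_0(y)-\iota_0(y)\iota_0(x)\bigr)
=\widetilde{\alpha}\iota_0([x,y]_0)=\iota_0(\alpha[x,y]_0)=\iota_0([x,y]).
\]
The twisting map is preserved because $\widetilde{\alpha}\iota_0=\iota_0\alpha$. For the derivation, $\widetilde d\,\iota_0=\widetilde{\alpha}\widetilde{\delta}\iota_0=\iota_0\alpha\delta=\iota_0 d$, using $d=\alpha\circ\delta$ from Remark~\ref{rem:regular-lie-untwisted}.

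The only step that is not immediate is the commutation $\widetilde{\alpha}\widetilde{\delta}=\widetilde{\delta}\widetilde{\alpha}$, and with it the existence and uniqueness of $\widetilde{\delta}$ on the nonunital envelope. I would handle both by the uniqueness argument for derivations on generators, as in the uniqueness part of Proposition~\ref{lem:extension-derivation-free}.
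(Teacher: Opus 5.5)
Your proof is correct and follows essentially the same route as the paper. You extend $\alpha$ and $\delta$ via the universal property of $\U^+(\g_0)$, obtain $\widetilde{\alpha}\widetilde{\delta}=\widetilde{\delta}\widetilde{\alpha}$ because derivations agreeing on the generators $\iota_0(\g)$ coincide, twist by $\widetilde{\alpha}$, and check the three compatibilities of $\iota_0$ directly. The only small differences are these: you construct $\widetilde{\delta}$ through the dual-number algebra $\U^+(\g_0)\oplus\U^+(\g_0)$ (mirroring Proposition~\ref{prop:semidirect-HNA}) and cite the associative twisting proposition. The paper instead extends $\delta$ to $T^+(\g)$, checks $\delta(r(x,y))=r(\delta(x),y)+r(x,\delta(y))$ so that $J_0$ is stable, and verifies the $\widetilde{\alpha}$-Leibniz rule by a direct computation.
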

	
	\begin{proof}
		The universal property of $\U^+(\g_0)$ follows from the freeness of
		$T^+(\g)$ as a nonunital associative algebra and the defining
		bracket--commutator relations in $J_0$.
		
		By Proposition~\ref{prop:reg-lie}, $(\g,[\cdot,\cdot]_0)$ is a Lie
		algebra and $\delta$ is a derivation. Moreover, $\alpha$ is an automorphism
		of this Lie algebra, since $[\alpha(x),\alpha(y)]_0=\alpha([x,y]_0)$ for all $x,y\in\g$. The
		universal property of $\U^+(\g_0)$ therefore yields an algebra
		endomorphism $\widetilde{\alpha}$ extending $\alpha$. The extension of
		$\alpha^{-1}$ is its inverse, so $\widetilde{\alpha}$ is an
		algebra automorphism.
		
		The Leibniz rule extends $\delta$ to a derivation of $T^+(\g)$. For a
		generator $r(x,y):=x\otimes y-y\otimes x-[x,y]_0$ of $J_0$, we have
		\begin{align*}
			\delta(r(x,y))
			&=
			\delta(x)\otimes y+x\otimes\delta(y)-\delta(y)\otimes x-y\otimes\delta(x)-\delta([x,y]_0)\\
			&=
			r(\delta(x),y)+r(x,\delta(y)),
		\end{align*}
		since $\delta$ is a derivation of $\g_0$. Hence $J_0$ is stable under the
		extended derivation, which descends to a derivation
		$\widetilde{\delta}$ of $\U^+(\g_0)$ satisfying
		$\widetilde{\delta}\iota_0=\iota_0\delta$.
		
		Since $\alpha\delta=\delta\alpha$ on $\g$, the derivation
		$\widetilde{\alpha}^{-1}\widetilde{\delta}\widetilde{\alpha}$ also extends
		$\delta$. By uniqueness, it equals $\widetilde{\delta}$. Hence
		$\widetilde{\alpha}\widetilde{\delta}=
		\widetilde{\delta}\widetilde{\alpha}$.
		
		The product $a\cdot_\alpha b=\widetilde{\alpha}(ab)$ is the twist of the
		associative algebra $\U^+(\g_0)$ by the automorphism
		$\widetilde{\alpha}$ \cite{HomAlgebraTwists}. Therefore,
		$(\U^+(\g_0),\cdot_\alpha,\widetilde{\alpha})$ is a Hom-associative
		algebra. Moreover, for $a,b\in\U^+(\g_0)$, the commutation relation
		$\widetilde{\alpha}\widetilde{\delta}=
		\widetilde{\delta}\widetilde{\alpha}$ gives
		\begin{align*}
			\widetilde d(a\cdot_\alpha b)
			&=
			\widetilde{\alpha}\widetilde{\delta}(\widetilde{\alpha}(ab))\\
			&=
			\widetilde{\alpha}^2(\widetilde{\delta}(a)b+a\widetilde{\delta}(b))\\
			&=
			\widetilde d(a)\cdot_\alpha\widetilde{\alpha}(b)+\widetilde{\alpha}(a)\cdot_\alpha\widetilde d(b).
		\end{align*}
		Thus $\widetilde d$ is a $\widetilde{\alpha}$-derivation.
		
		It remains to show that $\iota_0$ preserves the bracket, twisting map,
		and derivation. For
		$x,y\in\g$,
		\begin{align*}
			[\iota_0(x),\iota_0(y)]_{\U^+(\g_0)^\alpha}
			&=
			\iota_0(x)\cdot_\alpha\iota_0(y)-\iota_0(y)\cdot_\alpha\iota_0(x)\\
			&=
			\widetilde{\alpha}(\iota_0(x)\iota_0(y)-\iota_0(y)\iota_0(x))\\
			&=
			\widetilde{\alpha}(\iota_0([x,y]_0))\\
			&=
			\iota_0(\alpha([x,y]_0))
			=
			\iota_0([x,y]).
		\end{align*}
		The identities $\widetilde{\alpha}\iota_0=\iota_0\alpha$ and
		$\widetilde d\iota_0=\iota_0d$ follow from the definitions and
		$d=\alpha\delta$. Hence $\iota_0$ is a morphism in
		$\mathbf{HomLieDer}_\alpha$.
	\end{proof}
	
	For a regular Hom-Lie algebra $(\g,[\cdot,\cdot],\alpha,d)$ with an
	$\alpha$-derivation, Lemma~\ref{lem:regularity-envelope} ensures that
	$\alpha_U$ is bijective. We use the notation $u\ast v:=\alpha_U^{-1}(uv)$
	and $\Delta_U:=\alpha_U^{-1}\circ D_U$ for the untwisted product and map on
	$\U_{\mathrm{m}}(\g)$.
	
	\begin{theorem}\label{thm:regular-comparison}
		Let $(\g,[\cdot,\cdot],\alpha,d)$ be an object of
		$\mathbf{HomLieDer}_\alpha$ with $\alpha$ bijective. With the notation of
		Proposition~\ref{prop:classical-yau-twist}, there exists a unique
		isomorphism
		$$
		\Phi:(\U_{\mathrm{m}}(\g),\mu_U,\alpha_U,D_U)\to
		\U^+(\g_0)^\alpha
		$$
		in $\mathbf{HomAsDer}_\alpha$ such that $\Phi\circ\iota_U=\iota_0$. Consequently, untwisting the
		product and derivation on
		$\U_{\mathrm{m}}(\g)$ by $\alpha_U^{-1}$ gives an isomorphism of
		associative algebras with derivations:
		$$
		(\U_{\mathrm{m}}(\g),\ast,\Delta_U)\cong(\U^+(\g_0),\widetilde{\delta}).
		$$
	\end{theorem}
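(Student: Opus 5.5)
We obtain $\Phi$ from the universal property and build its inverse from the classical universal property of $\U^+(\g_0)$.

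\emph{The map $\Phi$.} By Proposition~\ref{prop:classical-yau-twist}, $\U^+(\g_0)^\alpha$ is an object of $\mathbf{HomAsDer}_\alpha$, and $\iota_0$ is a morphism $(\g,[\cdot,\cdot],\alpha,d)\to HLie_{\Der_\alpha}(\U^+(\g_0)^\alpha)$. Theorem~\ref{thm:main} then gives a unique morphism $\Phi:\U_{\mathrm m}(\g)\to\U^+(\g_0)^\alpha$ in $\mathbf{HomAsDer}_\alpha$ with $\Phi\circ\iota_U=\iota_0$. This settles existence and uniqueness, and it remains to show that $\Phi$ is bijective.

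\emph{The inverse $\Psi$.} By Lemma~\ref{lem:regularity-envelope}, $\alpha_U$ is bijective. Proposition~\ref{prop:reg-ass} and Remark~\ref{rem:regular-associative-untwisted} show that $(\U_{\mathrm m}(\g),\ast)$ is an associative algebra, that $\Delta_U$ is a derivation of it, and that $\alpha_U$ is an automorphism of it. We check that $\iota_U:\g_0\to(\U_{\mathrm m}(\g),\ast)^-$ is a Lie morphism:
\[
\iota_U(x)\ast\iota_U(y)-\iota_U(y)\ast\iota_U(x)=\alpha_U^{-1}\bigl(\iota_U([x,y])\bigr)=\iota_U\bigl(\alpha^{-1}[x,y]\bigr)=\iota_U([x,y]_0),
\]
where the first equality uses Proposition~\ref{prop:iota-der} and the second uses $\alpha_U\iota_U=\iota_U\alpha$. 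The universal property of $\U^+(\g_0)$ then gives an algebra morphism $\Psi:\U^+(\g_0)\to(\U_{\mathrm m}(\g),\ast)$ with $\Psi\iota_0=\iota_U$.

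\emph{$\Psi$ is a morphism in $\mathbf{HomAsDer}_\alpha$.} First, $\Psi\widetilde\alpha$ and $\alpha_U\Psi$ are both algebra morphisms $\U^+(\g_0)\to(\U_{\mathrm m}(\g),\ast)$ that agree on $\iota_0(\g)$. By uniqueness in the universal property, $\Psi\widetilde\alpha=\alpha_U\Psi$. Next, $\Psi\widetilde\delta$ and $\Delta_U\Psi$ are both $\Psi$-derivations, i.e.\ linear maps $E$ with $E(ab)=E(a)\ast\Psi(b)+\Psi(a)\ast E(b)$. They agree on generators because $\Delta_U\iota_U=\alpha_U^{-1}D_U\iota_U=\iota_U\alpha^{-1}d=\iota_U\delta$. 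Since $\iota_0(\g)$ generates $\U^+(\g_0)$, the two maps coincide. This generator argument is the step that needs the most care, and it can be made rigorous with the $A\oplus A$ trick used in the proof of Theorem~\ref{thm:main}. Finally, the product is preserved:
\[
\Psi(a\cdot_\alpha b)=\Psi\widetilde\alpha(ab)=\alpha_U\bigl(\Psi(a)\ast\Psi(b)\bigr)=\Psi(a)\Psi(b).
\]
Similarly $\Psi\widetilde d=\alpha_U\Psi\widetilde\delta=\alpha_U\Delta_U\Psi=D_U\Psi$. Hence $\Psi$ is a morphism $\U^+(\g_0)^\alpha\to\U_{\mathrm m}(\g)$ in $\mathbf{HomAsDer}_\alpha$.

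\emph{$\Phi$ and $\Psi$ are mutually inverse.} We have $\Psi\Phi\iota_U=\iota_U$, so uniqueness in Theorem~\ref{thm:main} gives $\Psi\Phi=\id$. In the other direction, $\Phi$ is also an algebra morphism for the untwisted products: since $\Phi\alpha_U=\widetilde\alpha\Phi$,
\[
\Phi(u\ast v)=\Phi\alpha_U^{-1}(uv)=\widetilde\alpha^{-1}\bigl(\Phi(u)\cdot_\alpha\Phi(v)\bigr)=\Phi(u)\Phi(v).
\]
So $\Phi\Psi$ is an algebra endomorphism of $\U^+(\g_0)$ fixing $\iota_0$, and classical uniqueness gives $\Phi\Psi=\id$. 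Thus $\Phi$ is an isomorphism in $\mathbf{HomAsDer}_\alpha$.

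\emph{The consequence.} We have already seen that $\Phi$ intertwines $\ast$ with the product of $\U^+(\g_0)$. It also intertwines the derivations:
\[
\Phi\Delta_U=\Phi\alpha_U^{-1}D_U=\widetilde\alpha^{-1}\widetilde d\,\Phi=\widetilde\delta\,\Phi .
\]
This gives the stated isomorphism $(\U_{\mathrm m}(\g),\ast,\Delta_U)\cong(\U^+(\g_0),\widetilde\delta)$ of associative algebras with derivations.
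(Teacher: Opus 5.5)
Your proposal is correct and follows essentially the same route as the paper. Both obtain $\Phi$ from Theorem~\ref{thm:main} and build the inverse from the universal property of $\U^+(\g_0)$ applied to the untwisted algebra $(\U_{\mathrm m}(\g),\ast,\Delta_U)$, check compatibility with $\widetilde\alpha$ and $\widetilde\delta$ on generators, and identify both composites with the identity via the two uniqueness statements; you also spell out two points the paper leaves implicit, namely that $\alpha_U$ is an automorphism of $(\U_{\mathrm m}(\g),\ast)$ and that $\Phi$ intertwines $\ast$ with the product of $\U^+(\g_0)$.
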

	
	\begin{proof}
		By Proposition~\ref{prop:classical-yau-twist}, $\iota_0$ is a morphism
		from $(\g,[\cdot,\cdot],\alpha,d)$ to the commutator Hom-Lie algebra of
		$\U^+(\g_0)^\alpha$. Hence Theorem~\ref{thm:main} gives a unique morphism
		$\Phi:(\U_{\mathrm{m}}(\g),\mu_U,\alpha_U,D_U)\to \U^+(\g_0)^\alpha$ in $\mathbf{HomAsDer}_\alpha$
		satisfying $\Phi\iota_U=\iota_0$.
		
		To construct the inverse, note that $\alpha_U$ is bijective by
		Lemma~\ref{lem:regularity-envelope}. Thus
		Proposition~\ref{prop:reg-ass} applied to
		$(\U_{\mathrm{m}}(\g),\mu_U,\alpha_U,D_U)$ gives the associative algebra
		with derivation $\U_{\mathrm{m}}(\g)^\sharp:=(\U_{\mathrm{m}}(\g),\ast,\Delta_U)$, where $u\ast
		v=\alpha_U^{-1}(uv)$ and $\Delta_U=\alpha_U^{-1}D_U$.
		
		The canonical map $\iota_U:\g\to\U_{\mathrm{m}}(\g)^\sharp$ is a
		morphism from the Lie algebra with derivation $(\g_0,\delta)$ to the
		commutator Lie algebra of $\U_{\mathrm{m}}(\g)^\sharp$. Indeed,
		\begin{align*}
			\iota_U([x,y]_0)
			&=
			\alpha_U^{-1}(\iota_U([x,y]))\\
			&=
			\alpha_U^{-1}(\iota_U(x)\iota_U(y)-\iota_U(y)\iota_U(x))\\
			&=
			\iota_U(x)\ast\iota_U(y)-\iota_U(y)\ast\iota_U(x),
		\end{align*}
		and
		$$
		\Delta_U\iota_U(x)=\alpha_U^{-1}D_U\iota_U(x)=\alpha_U^{-1}\iota_U(d(x))=\iota_U(\delta(x)).
		$$
		By the universal property of $\U^+(\g_0)$, there exists a unique algebra morphism
		$\Psi_0:\U^+(\g_0)\to \U_{\mathrm{m}}(\g)^\sharp$ such that $\Psi_0\iota_0=\iota_U$.
		We next check that this morphism preserves the derivations. Both
		$\Psi_0\widetilde{\delta}$ and $\Delta_U\Psi_0$ are derivations from
		$\U^+(\g_0)$ to the $\U^+(\g_0)$-bimodule
		$\U_{\mathrm m}(\g)^\sharp$ induced by $\Psi_0$, and they
		have the same values on the generators:
		$$
		\Psi_0\widetilde{\delta}\iota_0(x)=\Psi_0\iota_0(\delta(x))=\iota_U(\delta(x))=\Delta_U\iota_U(x)=\Delta_U\Psi_0\iota_0(x).
		$$
		Since $\U^+(\g_0)$ is generated as an associative algebra by
		$\iota_0(\g)$, it follows that
		$\Psi_0\widetilde{\delta}=\Delta_U\Psi_0$.
		
		We now check compatibility with the twisting maps. Since
		$\U^+(\g_0)$ is generated as an associative algebra by $\iota_0(\g)$, the
		equality $\Psi_0\widetilde{\alpha}=\alpha_U\Psi_0$ follows by checking it on $\iota_0(\g)$:
		$$
		\Psi_0\widetilde{\alpha}\iota_0(x)=\Psi_0\iota_0(\alpha(x))=\iota_U(\alpha(x))=\alpha_U\iota_U(x)=\alpha_U\Psi_0\iota_0(x).
		$$
		Therefore, for $a,b\in\U^+(\g_0)$,
		\begin{align*}
			\Psi_0(a\cdot_\alpha b)
			&=
			\Psi_0(\widetilde{\alpha}(ab))\\
			&=
			\alpha_U(\Psi_0(a)\ast\Psi_0(b))\\
			&=
			\Psi_0(a)\Psi_0(b),
		\end{align*}
		where the last product is the Hom-associative product in $\U_{\mathrm{m}}(\g)$. Moreover,
		$$
		\Psi_0\widetilde d=\Psi_0\widetilde{\alpha}\widetilde{\delta}=\alpha_U\Psi_0\widetilde{\delta}=\alpha_U\Delta_U\Psi_0=D_U\Psi_0.
		$$
		Thus $\Psi_0$ defines a morphism $\Psi:\U^+(\g_0)^\alpha\to
		(\U_{\mathrm{m}}(\g),\mu_U,\alpha_U,D_U)$ in $\mathbf{HomAsDer}_\alpha$.
		
		Finally, $\Psi\Phi$ is an endomorphism of $\U_{\mathrm{m}}(\g)$ in
		$\mathbf{HomAsDer}_\alpha$ and $(\Psi\Phi)\iota_U=\iota_U$. The uniqueness in
		Theorem~\ref{thm:main} gives $\Psi\Phi=\id$.
		For the other composition, untwisting turns $\Phi$ into an
		algebra morphism $\Phi^\sharp:\U_{\mathrm{m}}(\g)^\sharp\to(\U^+(\g_0),\widetilde{\delta})$
		preserving the derivations and satisfying
		$\Phi^\sharp\circ\iota_U=\iota_0$. Hence $\Phi^\sharp\Psi_0$ is an
		endomorphism of $(\U^+(\g_0),\widetilde{\delta})$ extending $\iota_0$.
		By the universal property of $\U^+(\g_0)$,
		$\Phi^\sharp\Psi_0=\id$, and hence $\Phi\Psi=\id$. Therefore, $\Phi$ is
		an isomorphism. Its untwisted form gives the stated isomorphism of
		associative algebras with derivations.
	\end{proof}

	\subsection{The PBW theorem for regular Hom-Lie algebras with an \texorpdfstring{$\alpha$}{alpha}-derivation}

	Theorem~\ref{thm:regular-comparison} reduces the PBW theorem for the
	untwisted product to the classical PBW theorem, which we recall below.

	Let $L$ be a Lie algebra over $\kk$, and let $\U(L)$ be its unital universal
	enveloping algebra. Define the standard filtration by
	$$
	F_p\U(L):=\operatorname{Im}\left(\bigoplus_{0\le r\le p}L^{\otimes r}\to\U(L)\right)
	\qquad (p\ge 0).
	$$
	We write $S(L)$ for the symmetric algebra of $L$ and
	$S^+(L):=\bigoplus_{p\ge 1}S^p(L)$. The nonunital universal enveloping
	algebra $\U^+(L)$ carries the induced filtration, with $F_0\U^+(L)=0$.
	
	\begin{lemma}[{\cite[Sections~17.3-17.4]{Humphreys}}]\label{thm:pbw-lie}
		The canonical map $L\to\U(L)$ is injective, and there is an
		isomorphism of graded associative algebras
		$$
		S(L)\cong\operatorname{gr}\U(L).
		$$
		If $L$ is finite-dimensional and $x_1<\cdots<x_n$ is an ordered basis, then
		$$
		1,
		\qquad
		x_{i_1}\cdots x_{i_r}
		\quad
		(r\ge 1,\ 1\le i_1\le\cdots\le i_r\le n)
		$$
		form a basis of $\U(L)$. For the nonunital universal enveloping algebra
		$\U^+(L)$, the corresponding graded algebra is given by
		$$
		\operatorname{gr}\U^+(L)\cong S^+(L).
		$$
	\end{lemma}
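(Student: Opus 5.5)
This lemma is the classical PBW theorem from \cite[Sections~17.3-17.4]{Humphreys}, plus a nonunital supplement. I would cite the unital statements and deduce the nonunital one from them.

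\textbf{Unital part.} Let $\pi_L:T(L)\to\U(L)$ be the canonical surjection. It maps $\bigoplus_{r\le p}L^{\otimes r}$ onto $F_p\U(L)$. The composite $T^p(L)\to F_p\U(L)\to\operatorname{gr}_p\U(L)$ is surjective. It kills the images of $x\otimes y-y\otimes x$, because in $\U(L)$ we have $xy-yx=[x,y]\in F_1\U(L)$. It therefore induces a surjective graded algebra map $\omega:S(L)\to\operatorname{gr}\U(L)$.

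Injectivity of $\omega$ is the substantive step, and the main obstacle. I would follow Humphreys and construct a representation of $L$ on $S(L)$ with $x_\lambda\cdot z_\Sigma=z_\lambda z_\Sigma$ whenever $\lambda\le\Sigma$. This is done by induction on filtration degree, checking the bracket relation on monomials. The representation extends to $\U(L)$ and shows that no nontrivial linear combination of ordered monomials lies in a lower filtration piece.

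Granting injectivity, the rest of the unital part is routine. Injectivity of $L\to\U(L)$ is the degree-one case. Lifting the standard monomial basis of $S(L)$ through the filtration gives the ordered monomial basis together with $1$.

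\textbf{Nonunital part.} I would first identify $\U^+(L)$ with the augmentation ideal $\U(L)_{>0}=\pi_L(T^+(L))$.

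\begin{itemize}
\item Set $\U(L)=\kk 1\oplus\pi_L(T^+(L))$. This sum is direct: the counit $\varepsilon:\U(L)\to\kk$, induced from the projection $T(L)\to\kk$, kills $\pi_L(T^+(L))$ and satisfies $\varepsilon(1)=1$.
\item The map $T^+(L)\to\U(L)$ has kernel $T^+(L)\cap J$, where $J$ is the unital ideal generated by the relators $r(x,y)=x\otimes y-y\otimes x-[x,y]$.
\item Every element of $J$ is a sum of terms $a\,r(x,y)\,b$ with $a,b$ in $T(L)$. Writing $a=a_0+a_+$ and $b=b_0+b_+$ with $a_0,b_0\in\kk$ shows that each such term lies in the nonunital ideal $J_0\subseteq T^+(L)$. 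Hence $J\subseteq J_0$.
\item The reverse inclusion is clear, so $T^+(L)\cap J=J_0$ and $\U^+(L)\cong\pi_L(T^+(L))$.
\end{itemize}

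The induced filtration satisfies $F_p\U^+(L)=F_p\U(L)\cap\U(L)_{>0}$, and $F_p\U(L)=\kk1\oplus F_p\U^+(L)$. Taking associated graded pieces gives $\operatorname{gr}_p\U^+(L)\cong\operatorname{gr}_p\U(L)\cong S^p(L)$ for $p\ge1$, and $\operatorname{gr}_0\U^+(L)=0$. Since the isomorphism is multiplicative, $\operatorname{gr}\U^+(L)\cong S^+(L)$ as graded algebras. Dropping $1$ from the basis above gives the ordered monomials of positive length as a basis of $\U^+(L)$.
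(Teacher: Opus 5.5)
Your proposal is correct and matches the paper, which gives no proof beyond the citation to Humphreys: your unital part is exactly that cited argument, namely the surjective graded map $\omega:S(L)\to\operatorname{gr}\U(L)$, with injectivity coming from the representation of $L$ on $S(L)$. The nonunital supplement, which the paper states without argument, is correctly supplied by your steps: $J=J_0$, then $\U(L)=\kk 1\oplus\pi_L(T^+(L))$ via the counit, then $F_p\U(L)=\kk 1\oplus F_p\U^+(L)$, which gives $\operatorname{gr}_p\U^+(L)\cong\operatorname{gr}_p\U(L)$ multiplicatively for $p\ge 1$ and $\operatorname{gr}_0\U^+(L)=0$.
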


	Let $(\g,[\cdot,\cdot],\alpha,d)$ be an object of
	$\mathbf{HomLieDer}_\alpha$ with $\alpha$ bijective, and set
	$
	[x,y]_0:=\alpha^{-1}([x,y])
	$
	and
	$
	\delta:=\alpha^{-1}\circ d.
	$
	Then $\g_0=(\g,[\cdot,\cdot]_0)$ is a Lie algebra with derivation $\delta$.
	Define a filtration on $\U^+(\g_0)$ by
	$$
	F_p\U^+(\g_0):=\operatorname{Im}\left(\bigoplus_{1\le r\le p}\g^{\otimes r}\to \U^+(\g_0)\right)
	\qquad (p\ge 1),
	$$
	and set $F_0\U^+(\g_0)=0$. We transport this filtration along the isomorphism
	$\Phi$ of Theorem~\ref{thm:regular-comparison} by defining
	$$
	F_p\U_{\mathrm{m}}(\g):=\Phi^{-1}\left(F_p\U^+(\g_0)\right)
	\qquad (p\ge 0).
	$$

	\begin{theorem}\label{cor:regular-pbw}
		Let $(\g,[\cdot,\cdot],\alpha,d)$ be an object of
		$\mathbf{HomLieDer}_\alpha$ with $\alpha$ bijective. Then the filtration
		$\{F_p\U_{\mathrm{m}}(\g)\}_{p\ge 0}$ is preserved by $\alpha_U$,
		$\Delta_U$, and $D_U$. With respect to the untwisted product $\ast$,
		there is an isomorphism of graded associative algebras
		$$
		\operatorname{gr}\big(\U_{\mathrm{m}}(\g),\ast\big)
		\cong
		S^+(\g_0).
		$$
		If $\g$ is finite-dimensional and $x_1<\cdots<x_n$ is an ordered basis
		of its underlying vector space, then the monomials
		$$
		\iota_U(x_{i_1})\ast\cdots\ast\iota_U(x_{i_r})
		\quad
		(r\ge 1,\ 1\le i_1\le\cdots\le i_r\le n)
		$$
		form a basis of $\U_{\mathrm{m}}(\g)$.
	\end{theorem}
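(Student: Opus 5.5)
The plan is to transport everything along the isomorphism $\Phi$ of Theorem~\ref{thm:regular-comparison}. The key input is the untwisted form of that theorem: $\Phi$ is an isomorphism of associative algebras with derivations
\[
(\U_{\mathrm m}(\g),\ast,\Delta_U)\to(\U^+(\g_0),\widetilde{\delta}).
\]
It satisfies $\Phi\circ\iota_U=\iota_0$ and $\Phi\circ\alpha_U=\widetilde{\alpha}\circ\Phi$. This holds because $\Phi$ is a morphism in $\mathbf{HomAsDer}_\alpha$, so it intertwines the twisting maps, and hence it intertwines $\alpha_U^{-1}$ with $\widetilde{\alpha}^{-1}$. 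By definition, $F_p\U_{\mathrm m}(\g)=\Phi^{-1}(F_p\U^+(\g_0))$. Therefore each assertion reduces to the corresponding assertion for $\U^+(\g_0)$ with the maps $\widetilde{\alpha}$, $\widetilde{\delta}$ and $\widetilde d=\widetilde{\alpha}\widetilde{\delta}$.

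\emph{Step 1: the filtration is preserved.} The space $F_p\U^+(\g_0)$ is spanned by the products $\iota_0(y_1)\cdots\iota_0(y_r)$ with $1\le r\le p$. Since $\widetilde{\alpha}$ is an algebra morphism extending $\alpha$, it sends such a product to $\iota_0(\alpha y_1)\cdots\iota_0(\alpha y_r)$, which again lies in $F_p$. Since $\widetilde{\delta}$ is a derivation extending $\delta$, the Leibniz rule writes the image of the product as a sum of $r$ products of length $r$, each with one factor replaced by $\iota_0(\delta y_k)$; this also lies in $F_p$. It follows that $\widetilde d=\widetilde{\alpha}\widetilde{\delta}$ preserves $F_p$ as well. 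Conjugating by $\Phi$, we get $\Phi^{-1}\widetilde{\alpha}\Phi=\alpha_U$ and $\Phi^{-1}\widetilde{\delta}\Phi=\Delta_U$. We also get $\Phi^{-1}\widetilde d\Phi=D_U$, since $\Phi$ intertwines $D_U$ and $\widetilde d$. Hence $\alpha_U$, $\Delta_U$ and $D_U$ preserve $F_p\U_{\mathrm m}(\g)$.

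\emph{Step 2: the associated graded algebra.} Since $\Phi$ is an algebra isomorphism for $\ast$ and the filtration is defined by pulling back along $\Phi$, it is a filtered isomorphism. It therefore induces an isomorphism of graded algebras $\operatorname{gr}(\U_{\mathrm m}(\g),\ast)\cong\operatorname{gr}\U^+(\g_0)$. We must also check that $F_p\ast F_q\subseteq F_{p+q}$, which follows because $\Phi$ is multiplicative for $\ast$. Lemma~\ref{thm:pbw-lie} then gives $\operatorname{gr}\U^+(\g_0)\cong S^+(\g_0)$.

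The one point needing care is that this last isomorphism follows from the unital PBW theorem. View $\U^+(L)$ as the augmentation ideal of $\U(L)$, that is, the image of $\bigoplus_{r\ge1}L^{\otimes r}$. Then $F_p\U^+(L)=F_p\U(L)\cap\U^+(L)$, and the graded pieces are the positive-degree pieces of $\operatorname{gr}\U(L)\cong S(L)$. If an explicit identification of $\U^+(\g_0)$ with the augmentation ideal is wanted, it can be obtained from the universal properties of $\U^+$ and $\U$: we get the map $\U^+(\g_0)\to\U(\g_0)$, which is injective because $T^+/J_0$ embeds into $T/J$, as $J\subseteq T^+$ and $J$ is generated by the same elements.

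\emph{Step 3: the PBW basis.} By the unital basis in Lemma~\ref{thm:pbw-lie}, the ordered monomials $\iota_0(x_{i_1})\cdots\iota_0(x_{i_r})$ with $r\ge1$ form a basis of the augmentation ideal $\U^+(\g_0)$. Applying the algebra isomorphism $\Phi^{-1}$ for $\ast$, together with $\Phi^{-1}\iota_0=\iota_U$, sends them to $\iota_U(x_{i_1})\ast\cdots\ast\iota_U(x_{i_r})$. These therefore form a basis of $\U_{\mathrm m}(\g)$.

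The main obstacle is the nonunital bookkeeping in Step 2, namely the identification of $\U^+$ with the augmentation ideal; everything else is formal transport along $\Phi$.
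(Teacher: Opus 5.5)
Your proposal is correct and follows essentially the same route as the paper: transport the filtration, the product $\ast$, and the maps $\widetilde{\alpha}$, $\widetilde{\delta}$, $\widetilde d$ along the untwisted isomorphism of Theorem~\ref{thm:regular-comparison}, then invoke Lemma~\ref{thm:pbw-lie}. Your only addition is the explicit identification of $\U^+(\g_0)$ with the augmentation ideal of $\U(\g_0)$, using that $J_0=J\subseteq T^+$; this is valid, and the paper simply absorbs it into the cited PBW lemma.
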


	\begin{proof}
		By Theorem~\ref{thm:regular-comparison}, $\Phi$ induces an isomorphism of
		associative algebras with derivations
		$$
		\Phi^\sharp:
		(\U_{\mathrm{m}}(\g),\ast,\Delta_U)
		\longrightarrow
		(\U^+(\g_0),\widetilde{\delta}),
		$$
		with $\Phi^\sharp\circ\iota_U=\iota_0$. By
		Lemma~\ref{thm:pbw-lie},
		$$
		\operatorname{gr}\U^+(\g_0)\cong S^+(\g_0).
		$$
		Transporting the filtration along $\Phi^\sharp$ therefore gives
		$$
		\operatorname{gr}\big(\U_{\mathrm{m}}(\g),\ast\big)
		\cong S^+(\g_0).
		$$
		If $\g$ is finite-dimensional, $(\Phi^\sharp)^{-1}$ sends the ordered PBW
		monomials in $\U^+(\g_0)$ to the stated monomials formed with $\ast$. Hence the
		latter form a basis of $\U_{\mathrm{m}}(\g)$.

		It remains to prove that the three maps preserve the filtration.
		The map $\widetilde{\alpha}$ sends a product of $r$ generators to a product
		of $r$ generators and therefore preserves the filtration of
		$\U^+(\g_0)$. The Leibniz rule shows that $\widetilde{\delta}$ preserves
		the same filtration. Under $\Phi^\sharp$, these maps correspond to
		$\alpha_U$ and $\Delta_U$, respectively. Hence $\alpha_U$ and $\Delta_U$
		preserve the filtration of $\U_{\mathrm{m}}(\g)$. Since $D_U=\alpha_U\circ\Delta_U$, $D_U$
		preserves the filtration as well.
	\end{proof}

	\begin{remark}\label{rem:pbw-comparison}
		Every involutive Hom-Lie algebra is regular because
		$\alpha^2=\id$ implies that $\alpha$ is bijective. Hence
		Theorem~\ref{cor:regular-pbw} applies to the involutive Hom-Lie
		algebras considered in \cite{GuoZhangZheng} when equipped with
		$\alpha$-derivations. The bijectivity assumption is required for the
		untwisting used in the theorem.
	\end{remark}

	\section{Conclusion and further problems}

	We have constructed universal enveloping Hom-associative algebras for
	multiplicative Hom-Lie algebras with $\alpha$-derivations. The derivation
	extends to the free Hom-nonassociative algebra and descends through the
	Hom-associativity and bracket--commutator relations. The resulting
	universal property gives an enveloping functor left adjoint to the
	commutator functor.

	In the regular case, the enveloping algebra is canonically isomorphic to
	the twist of the nonunital universal enveloping algebra of the associated
	Lie algebra. Untwisting this isomorphism gives the PBW description of the
	associated graded algebra and, for a finite-dimensional Hom-Lie algebra,
	an ordered monomial basis. The twisting map and both induced derivations
	preserve the PBW filtration.

	Further questions concern the representation theory and cohomology of
	Hom-Lie algebras with compatible derivations, and PBW theorems under weaker
	conditions on the twisting map. One may also ask whether the enveloping
	construction extends to Hom-Poisson algebras or graded Hom-Lie algebras
	with compatible derivations.

\end{document}